\begin{document}
\theoremstyle{plain}
\newtheorem{Definition}{Definition}[section]
\newtheorem{Proposition}{Proposition}[section]
\newtheorem{Property}{Property}[section]
\newtheorem{Theorem}{Theorem}[section]
\newtheorem{Lemma}[Theorem]{\hspace{0em}\bf{Lemma}}
\newtheorem{Corollary}[Theorem]{Corollary}
\newtheorem{Remark}{Remark}[section]

\setlength{\oddsidemargin}{ 1cm}  
\setlength{\evensidemargin}{\oddsidemargin}
\setlength{\textwidth}{13.50cm}
\vspace{-.8cm}

\noindent  {\LARGE \begin{center}
The regular quantizations of certain holomorphic bundles
\end{center}}

\vskip 20pt

\noindent\text{Zhiming Feng  }\\
\noindent\small {School of Mathematical and Information Sciences, Leshan Normal University, Leshan, Sichuan 614000, P.R. China } \\
\noindent\text{Email: fengzm2008@163.com}

\vskip 20pt

\normalsize \noindent\textbf{Abstract}\quad {In this paper, we study the regular quantizations of K\"{a}hler manifolds by using the first two coefficients of Bergman function expansions. Firstly, we obtain sufficient and necessary conditions for certain Hermitian holomorphic vector bundles and their ball subbundles to be regular quantizations. Secondly,  we obtain that some projective bundles over the Fano manifolds $M$ admit regular quantizations if and only if $M$ are biholomorphically isomorphism to the complex projective spaces. Finally, we obtain the balanced metrics on certain Hermitian holomorphic vector bundles and their ball subbundles over the Riemann sphere.
\smallskip\\\
\textbf{Key words:} Bergman functions \textperiodcentered \;Balanced metrics \textperiodcentered \;Regular quantizations \textperiodcentered \;Complex projective spaces \textperiodcentered \;Hermitian holomorphic vector bundles
\smallskip\\\
\textbf{Mathematics Subject Classification (2010):} 32A25  \textperiodcentered \, 32M15  \textperiodcentered \, 32Q15

\setlength{\oddsidemargin}{-.5cm}  
\setlength{\evensidemargin}{\oddsidemargin}
\pagenumbering{arabic}
\renewcommand{\theequation}
{\arabic{section}.\arabic{equation}}
\setcounter{section}{0}
 \setcounter{equation}{0}
\section{Introduction and main results}

Let $(L,h,\pi)$ be the quantum line bundle over a K\"{a}hler manifold $(M,g)$ of complex dimension $n$, namely, $(L, h)$ is a positive Hermitian holomorphic line bundle  such that $c_1 (L,h)=\omega$, where $\omega$ denotes the K\"{a}hler form associated to the K\"{a}hler  metric $g$,  $c_1(L,h)$ denotes the curvature of the Chern connection on the
Hermitian holomorphic bundle $(L,h)$, and $\pi:L\rightarrow M$ is the bundle projection.  The curvature $c_1(L,h)$ is given by
$$c_1(L,h) = -\frac{\sqrt{-1}}{2\pi}\partial\bar{\partial}\log h(\sigma(z), \sigma(z))$$
for a trivializing holomorphic section $\sigma : U \rightarrow L\backslash\{0\}$. The quantum line bundle $(L,h)$  is also called a geometric quantization of the K\"ahler manifold $(M, \omega)$.

For a given positive integer $m$,  let $(L^{m},h^{m})$ be the $m$-th tensor power of $L$ with $c_1(L^m,h^m)=m\omega$, and $H^0(M,L^m)$  be the space  consisting of global holomorphic sections of $L^m$. Define
\begin{equation}\label{e1.1}
  \mathcal{H}_m^2(M):=\left\{s\in H^0(M,L^m): \|s\|^2:=\int_M h^m(s(z),s(z))\frac{\omega^n}{n!}<+\infty \right\}.
\end{equation}
If \;$ \mathcal{H}_m^2(M)\neq \{0\}$,  the Bergman function on $M$  with respect to the metric $mg$ is defined by
\begin{equation}\label{e1.2}
 \epsilon_{mg}(z)=\sum_{j=1}^{\dim  \mathcal{H}_m^2(M)}h^m(s_j(z),s_j(z)),\;\; z\in M,
\end{equation}
where $\{s_j: 1\leq j\leq \dim \mathcal{H}_m^2(M)\}$ is an orthonormal basis for the  Hilbert space $ \mathcal{H}_m^2(M)$.

For compact manifolds by Catlin \cite{Cat} and Zelditch \cite{Zeld}, and for non-compact manifolds  by Ma-Marinescu \cite{MM07,MM08} and Engli\v{s} \cite{E1},
$\epsilon_{mg}(z)$ admits an asymptotic expansion as $m\rightarrow +\infty$
\begin{equation}\label{e1.3}
  \epsilon_{mg}(z)\sim\sum_{j=0}^{+\infty} a_j^{(g)}(z)m^{n-j},
\end{equation}
where expansion coefficients $a_j^{(g)}$ are polynomials of the curvature  and its covariant derivatives for the  metric $g$. For compact manifolds by Lu \cite{Lu} and for non-compact manifolds by Ma-Marinescu \cite{MM07} (Theorem 6.1.1) and Engli\v{s} \cite{E2}, the expansion coefficients $a_j^{(g)}$ have the same expression, in particular $a_0^{(g)}, a_1^{(g)}$ and $ a_2^{(g)}$ are given by
\begin{equation}\label{e1.4}
\left\{  \begin{array}{ll}
    a_0^{(g)} & =1, \\\\
    a_1^{(g)} & = \frac{1}{2}k_g, \\\\
    a_2^{(g)} & =\frac{1}{3}\triangle k_g+\frac{1}{24}|R_g|^2-\frac{1}{6}|\mathrm{Ric}_g|^2+\frac{1}{8}k_g^2.
  \end{array}\right.
\end{equation}
 Here $k_g$, $\triangle_g$, $R_g$ and $\mathrm{Ric}_g$  denote  the scalar curvature, the Laplace, the curvature tensor and the Ricci curvature associated to the
metric $g$, respectively.  For graph theoretic formulas of coefficients $a_j$, see Xu \cite{X}. For the more references of the Bergman function expansions, refer to Berezin \cite{Berezin},  Dai-Liu-Ma \cite{Dai-Liu-Ma}, Berman-Berndtsson-Sj\"ostrand \cite{B-B-S},  Ma-Marinescu \cite{MM12}, Hsiao \cite{HCY} and Hsiao-Marinescu \cite{HM2014}.

The K\"{a}hler metric $g$ on $M$ is balanced  if the Bergman function $\epsilon_{g}(z)$ $(z\in M)$ is a positive constant on $M$.   Balanced metric plays an important  role in the quantization  of a K\"{a}hler manifold, see Berezin \cite{Berezin}, Rawnsley \cite{Ra}, Cahen-Gutt-Rawnsley \cite{CGR-1,CGR-2}, Engli\v{s} \cite{E0}, Arezzo-Loi \cite{Arezzo-Loi},  Ma-Marinescu \cite{MM07} and Lui\'c \cite{Lui}. Balanced metrics play a central role when the polarized algebraic manifolds admit K\"{a}hler metrics of constant scalar curvature, see Donaldson \cite{Donaldson}. For the study of the balanced metrics,  see also Engli\v{s} \cite{E0,E3},  Loi-Mossa \cite{Loi-Mossa}, Loi-Zedda \cite{Loi-Zed, LZ}, Loi-Zedda-Zuddas \cite{Loi-Zed-Zud} and Arezzo-Loi-Zuddas \cite{Arezzo-Loi-Zuddas}.

If there is a positive integer $m_0$ such that Bergman functions $\epsilon_{mg}(z)$ are  positive  constants on $M$ for all $m\geq m_0$, $(L,h)$ is called a regular  quantization of the K\"ahler manifold $(M,\omega)$, for regular quantizations of compact complex manifolds, see Cahen-Gutt-Rawnsley \cite{CGR-1,CGR-2}. Cahen-Gutt-Rawnsley \cite{CGR-1} have shown that a geometric quantization $(L, h)$ of a homogeneous and simply connected compact K\"ahler manifold $(M,\omega)$ is regular. If $(M,\omega)$ admits a regular quantization, Cahen-Gutt-Rawnsley \cite{CGR-1} also have generalized Berezin's method \cite{Berezin} to the case of compact K\"ahler manifolds and to obtain a deformation quantization of the K\"ahler manifold $(M,\omega)$,  Loi in \cite{Loi1}  has proved that there exists an asymptotic expansion on $(M,\omega)$ for $\epsilon_{mg}(z)$ as \eqref{e1.3} and \eqref{e1.4}, and all coefficients $a_j^{(g)}$ are constants. For the study of the regular quantization,  also refer to  Arezzo-Loi \cite{Arezzo-Loi} and Loi \cite{Loi3}.

For the case of  non-compact manifolds, Loi-Mossa in \cite{Loi-Mossa} have shown that a bounded homogeneous domain admits a regular quantization. For the nonhomogeneous setting,  we give the existence of regular quantizations on some Hartogs domains in \cite{B-F-T} and \cite{FT2}.

In \cite{Feng}, the author studied constant scalar curvature K\"{a}hler metrics such that the second coefficients $a_2^{(g)}$ of the Bergman function expansions are constants on certain  Hartogs domains,  namely the part $(\mathrm{I})$ of Theorem \ref{apth:3.3} below with $\lambda=1$.

In this paper, we use the coefficients of Bergman function expansions to study the existence of regular quantizations of K\"{a}hler manifolds. Firstly, we use Theorem \ref{apth:3.3} to study regular quantizations of trivial Hermitian holomorphic vector bundles and their ball subbundles, and get Theorem \ref{Th:1.2}. Secondly, we study regular quantizations of certain Hermitian holomorphic vector bundles and their ball subbundles according to Theorem \ref{Th:1.2}, we get Theorem \ref{Th:1.3}. Thirdly, we study regular quantizations of compactification of certain Hermitian holomorphic vector bundles over compact complex manifolds, we obtain Theorem \ref{Th:1.4}. Finally, using Theorem \ref{Th:1.3} we get the balanced  metrics on certain Hermitian holomorphic vector bundles and their ball subbundles over the Riemann sphere $\mathbb{CP}^1$, namely Corollary \ref{Cor:1.4}. The main results of this article are described below.

Before describing Theorem \ref{Th:1.3}, we first define quantum line bundles over certain  holomorphic vector bundles. Let $(L_0^{*},h_{0}^{*},\pi_0)$ be the dual bundle of the quantum line bundle $(L_0,h_0,\pi_0)$ over a connected K\"{a}hler manifold $(M_0,\omega_0)$ of complex dimension $d$. For a given positive integer $r$, let $(L_0^{*\oplus r},h:=h_0^{*\oplus r})$ be the direct sum of $r$ copies of $(L_0^{*},h_{0}^{*})$, $B(L_0^{*\oplus r}):=\{v\in L_0^{*\oplus r}: h(v,v)<1\}$ be the ball subbundle of the Hermitian holomorphic vector bundle $(L_0^{*\oplus r},h)$,  $p: L_0^{*\oplus r}\rightarrow M_0$ and $\Pi:L:=p^{*}L_0\rightarrow L_0^{*\oplus r}$ be the bundle projections, where $L:=p^{*}L_0$ be pull-back of $L_0$ under the map $p$, that is
\begin{equation}\label{e1.5}
 \CD
  L=p^{*}L_0 @> >> L_0 \\
  @V \Pi VV @V \pi_0 VV  \\
  L_0^{*\oplus r} @>p>> M_0
\endCD,\;\;\;\;\;\;
\CD
 L=p^{*}L_0 @> >> L_0 \\
  @V \Pi VV @V \pi_0 VV  \\
  B(L_0^{*\oplus r}) @>p>> M_0
\endCD.
\end{equation}

Let $M:=L_0^{*\oplus r}$ or $B(L_0^{*\oplus r})$. Given a real smooth function $F$ with $F(0)=0$, let
 \begin{equation}\label{e1.8}
   h_F(\cdot,\cdot):=e^{-F(h(\Pi\cdot,\Pi\cdot))}\times (p^{*}h_0)(\cdot,\cdot)
 \end{equation}
be a Hermitian metric on the line bundle $L$ over $M$, where the Hermitian metric $p^{*}h_0$ on line bundle $L$ is pull-back of $h_0$ under the map $p$. If $\omega_F:=c_1(L,h_F)>0$, then $(L,h_F,\Pi)$ is the quantum line bundle over $(M,\omega_F)$. Let $g_F$ be a K\"{a}hler metric associated to the K\"{a}hler form $\omega_F=c_1(L,h_F)$ on $M$.

\begin{Theorem}\label{Th:1.3}{Under the assumptions above,  further suppose that

$(\mathrm{i})$  There exists a non-negative integer $m_0$ such that
\begin{equation*}
  \mathcal{H}_m^2(M_0):=\left\{s\in H^{0}(M_0,L_0^m):\|s\|^2:=\int_{M_0}h^m_0(s,s)\frac{\omega_{0}^{d}}{d!}<+\infty\right\}\neq \{0\}
\end{equation*}
for all $m>m_0$.

 $(\mathrm{ii})$ There exists a dense open contractible subset $U\subset M_0$ such that $M_0-U$ is an analytic subset of $M_0$, thus the restriction of $L_0$ to $U$ is the trivial holomorphic line bundle.

 $(\mathrm{iii})$ $(L,h_F,\Pi)$ is the quantum line bundle over $(M,\omega_F)$.

 $(\mathrm{iv})$ The fibre metric of $g_F$ is complete.

Then we have the following conclusions.

 $(\mathrm{I})$ There exists a positive integer $m_1$ such that the Bergman functions $\epsilon_{m g_F}$ for $(L^m,h^m_F)$ are constants  on $M=B(L_0^{*\oplus r})$  for all $m\geq m_1$ if and only if
\begin{equation}\label{e1.12}
  \left\{\begin{array}{l}
    F(\rho)   =-\frac{1}{A}\log(1-\rho),\;A>0, \\\\
     \epsilon_{m g_0} =m+r-(1+r)A,  \\\\
       \epsilon_{m g_F} = \prod_{j=1}^{1+r}(m-jA), \\\\
     m\geq m_1>\max \left\{m_0,(1+r)A\right\}
   \end{array}\right.
\end{equation}
for the case of $d=1$, and
\begin{equation}\label{e1.13}
  \left\{\begin{array}{l}
    F(\rho)   =-\log(1-\rho), \\\\
    \epsilon_{m g_0}= \prod_{j=1}^{d}(m-j), \\\\
      \epsilon_{m g_F} = \prod_{j=1}^{n}(m-j), \\\\
    m\geq m_1>\max\{m_0, n\}
   \end{array}\right.
\end{equation}
for the case of $d>1$, where $\epsilon_{m g_0}$ are Bergman functions for $(L_0^m,M_0,h^m_0)$, and $n=d+r$ denotes the dimension of $M$.

$(\mathrm{II})$ There exists a positive integer $m_1$ such that the Bergman functions $\epsilon_{m g_F}$ are constants  on $M=L_0^{*\oplus r}$ for all $m\geq m_1$ if and only if
\begin{equation}\label{e1.14}
  \left\{\begin{array}{l}
            d=1,\\\\
            F(\rho)=c\rho,c>0, \\\\
    \epsilon_{m g_0} =m+r, \\\\
       \epsilon_{m g_F}  = m^{1+r}, \\\\
     m\geq m_1>m_0.
   \end{array}\right.
\end{equation}
}\end{Theorem}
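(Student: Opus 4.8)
The plan is to reduce the bundle computation to a fibre-weight decomposition and then invoke Theorem \ref{Th:1.2}, which already settles the trivial-base case. First I would use assumption (ii) to fix a trivialising holomorphic section $\sigma_0$ of $L_0$ over the contractible dense open set $U$, so that $h_0(\sigma_0,\sigma_0)=e^{-\phi_0}$ for a K\"ahler potential $\phi_0$ of $\omega_0$ on $U$. Over $p^{-1}(U)$ the bundle $L_0^{*\oplus r}$ trivialises with fibre coordinates $w=(w_1,\dots,w_r)$, one has $\rho:=h(\Pi\cdot,\Pi\cdot)=e^{\phi_0}|w|^2$, so that $M\cap p^{-1}(U)$ is the Hartogs-type domain $\{e^{\phi_0}|w|^2<1\}$ (ball case) or $U\times\mathbb{C}^r$ (full case), and by \eqref{e1.8} the metric $g_F$ has local K\"ahler potential $\Phi=F(\rho)+\phi_0$. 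Since $M_0\setminus U$ is an analytic subset, it is negligible both for $L^2$ integrals and for continuity arguments, so every global computation may be carried out on $p^{-1}(U)$ and then extended.

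The key structural observation is that the tautological fibre coordinate $w_j$ is a global holomorphic section of $p^{*}L_0^{*}$, hence $w^\alpha$ is a section of $p^{*}L_0^{-|\alpha|}$, so a global section of $L^m=p^{*}L_0^m$ of pure fibre weight $\alpha$ is exactly $(p^{*}s_\alpha)\,w^\alpha$ with $s_\alpha\in H^0(M_0,L_0^{m+|\alpha|})$; the fibre torus action then gives the orthogonal decomposition
\[
\mathcal{H}^2_m(M)=\bigoplus_{\alpha\ge 0}\mathcal{H}^2_{m+|\alpha|}(M_0)\otimes\langle w^\alpha\rangle,
\]
nonempty for every $\alpha$ once $m>m_0$ by (i). I would then compute the fibre integral of $e^{-mF(\rho)}|w^\alpha|^2$ against $\omega_F^n/n!$; the substitution $u_j=e^{\phi_0}|w_j|^2$ shows that all $\phi_0$-dependence factors out, giving $\|(p^{*}s_\alpha)w^\alpha\|^2=C_{\alpha,m}\,\|s_\alpha\|^2_{(m+|\alpha|)g_0}$ with $C_{\alpha,m}$ a constant independent of $z$. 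Assembling orthonormal bases weight by weight and using \eqref{e1.2} for the base yields
\[
\epsilon_{mg_F}(z,w)=\sum_{\alpha\ge0}C_{\alpha,m}^{-1}\,e^{-mF(\rho)}\,u^\alpha\,\epsilon_{(m+|\alpha|)g_0}(z),\qquad u_j=e^{\phi_0}|w_j|^2,\ \rho=\textstyle\sum_j u_j .
\]
Establishing this formula, and in particular the $z$-independence of the $C_{\alpha,m}$ (which rests on the Hartogs Hessian determinant computation and on completeness (iv) guaranteeing convergence and the correct Hilbert space), is the technical heart and the step I expect to be the main obstacle.

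For the ``only if'' direction, evaluating the displayed formula at $w=0$ gives $\epsilon_{mg_F}(z,0)=C_{0,m}^{-1}\epsilon_{mg_0}(z)$, so $\epsilon_{mg_0}$ is constant on $U$, hence on $M_0$. Reading off successive $(\alpha,\bar\alpha)$-Taylor coefficients at the origin and inducting on $|\alpha|$ (the top monomial $u^\alpha$ carries $\epsilon_{(m+|\alpha|)g_0}$ with nonzero constant weight $C_{\alpha,m}^{-1}$, the lower ones already known) forces $\epsilon_{(m+l)g_0}$ to be constant for every $l\ge0$. As this holds for all $m\ge m_1$, the base $(L_0,h_0)$ is itself a regular quantization of $(M_0,\omega_0)$, and its Bergman numbers $\epsilon_{kg_0}=b_k$ become constants. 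Substituting the $b_k$ back makes $\epsilon_{mg_F}$ a function of $u$ alone, and the requirement that it be constant over the fibre range ($\sum_j u_j<1$ for the ball, $u_j\ge0$ for the full bundle) is precisely the trivial-base situation resolved by Theorem \ref{Th:1.2}. Invoking that theorem identifies the admissible $F$ and the values $b_k$, producing the logarithmic potentials and product formulas \eqref{e1.12}, \eqref{e1.13} in the ball case and the linear potential \eqref{e1.14} in the full case; the dimension dichotomy ($d=1$ versus $d>1$, and the collapse to $d=1$ for the full bundle) is exactly the rigidity Theorem \ref{Th:1.2} imposes on a regular base of dimension $d$.

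The ``if'' direction is then a direct verification: for each prescribed $F$ and each prescribed sequence $b_k$ one substitutes into the Bergman formula of the second paragraph and checks, using the explicit constants $C_{\alpha,m}$, that the series telescopes to the stated constant in \eqref{e1.12}--\eqref{e1.14} for $m\ge m_1$, the thresholds $m_1>\max\{m_0,(1+r)A\}$ and $m_1>\max\{m_0,n\}$ ensuring positivity and convergence. As noted, the principal difficulty is not this bookkeeping but the derivation of the weight-decomposed Bergman formula with $z$-independent fibre constants; once that formula and the reduction to Theorem \ref{Th:1.2} are in place, both implications follow, and the explicit expression for $\epsilon_{mg_0}$ together with the constancy of the expansion coefficients in \eqref{e1.3}--\eqref{e1.4} pins down the base $M_0$.
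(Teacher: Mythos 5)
Your proposal is correct and follows essentially the same route as the paper: the paper likewise trivializes $L_0$ over the dense contractible set $U$, uses the $L^2$-extension lemma across the analytic set $p^{-1}(M_0-U)$ (its Lemma \ref{Le:2.2}, via Skoda) to identify $\mathcal{H}_m^2(M)$ isometrically with the weighted Bergman space on the Hartogs domain $\widetilde{U}=\{(z,w):e^{\phi(z)}\|w\|^2<1\}$ (resp.\ $U\times\mathbb{C}^r$), so that $\epsilon_{mg_F}$ and $\epsilon_{mg_0}$ transfer to the trivial-base setting by density and continuity, and then concludes at once by citing Theorem \ref{Th:1.2} with $\lambda=1$ and $E=\mathbb{N}\cap(m_0,+\infty)$. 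Your middle paragraphs (the fibre-weight decomposition, the kernel formula with $z$-independent constants $C_{\alpha,m}$, the $w=0$ evaluation and the induction on $|\alpha|$) re-derive machinery that in the paper already sits inside the proof of Theorem \ref{Th:1.2}, so they are sound but redundant once that theorem is invoked.
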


\begin{Theorem}\label{Th:1.4}{
Let $(L_0, h_0,\Pi_0)$ be the quantum line bundle over a connected compact K\"{a}hler manifold $(M_0,\omega_0)$ of complex dimension $d$ with the first Chern class $c_1(M_0)>0$ when $d>1$. Assume that there exists a dense open contractible subset $\Omega\subset M_0$ such that $M_0-\Omega$ is an analytic subset of $M_0$, thus the restriction of $L_0$ to $\Omega$ is the trivial holomorphic line bundle.  Suppose that
\begin{equation*}
  \mathcal{H}_m^2(M_0):=\left\{s\in H^{0}(M_0,L_0^m):\|s\|^2:=\int_{M_0}h^m_0(s,s)\frac{\omega_{0}^{d}}{d!}<+\infty\right\}\neq \{0\}
\end{equation*}
for all $m\in \mathbb{N}$.

Let $E$ be the direct sum of $r$ copies of $L_0$, i.e. $E = L_0^{\oplus r}$ with an associated hermitian metric  still denoted by $h_0$. Denote $\mathcal{O}_{M_0}$ as the structure sheaf of $M_0$. The projective bundle $M:=\mathbb{P}(E \oplus \mathcal{O}_{M_0})$ can be viewed as a compactification of $E$. The projection of the vector bundle $E$ to $M_0$ also denoted by $\Pi_0 : E \rightarrow M_0$.

Let
\begin{equation*}
  \omega(u)=(\Pi_0^{*}\omega_0)(u)+\frac{\sqrt{-1}}{2\pi}\partial\overline{\partial}F(h_0(u,u))\;\;(u\in E )
\end{equation*}
be a K\"{a}hler form on $E$ with $F(0)=0$ such that

$(\mathrm{i})$ $\omega$ can be extended across $M-E$ (the extension of $\omega$ on $M$ is still expressed as $\omega$);

$(\mathrm{ii})$ $(M,\omega)$ admits a geometric quantization $(L, h)$.

$(\mathrm{iii})$ Bergman functions $\epsilon_{m g}$ for $(L^m,h^m)$ are constants on $M$  for all $m\geq 1$, where $g$ is a K\"{a}hler  metric  associated with  the K\"{a}hler form $\omega$.

Then $M_0$ and $M$ are biholomorphically isomorphism to the complex projective spaces $\mathbb{CP}^d$ and $\mathbb{CP}^{d+r}$, respectively, and $F(\rho)=\log(1+c\rho)$ with $c>0$.

}\end{Theorem}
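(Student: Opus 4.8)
The plan is to reduce the compact problem on $M$ to the local analysis of the Calabi-type metric on the total space of $E$ that already underlies Theorem~\ref{Th:1.3}, and then to upgrade the resulting constancy conditions to a global rigidity statement. First I would record the consequence of hypothesis $(\mathrm{iii})$: since $(M,\omega)$ is compact and admits a geometric quantization whose Bergman functions $\epsilon_{mg}$ are constant for all $m\geq 1$, Loi's theorem \cite{Loi1} guarantees the asymptotic expansion \eqref{e1.3}--\eqref{e1.4} with \emph{all} coefficients $a_j^{(g)}$ constant. In particular $a_1^{(g)}=\tfrac12 k_g$ is constant, so $g$ has constant scalar curvature, and $a_2^{(g)}$ is constant, which after using $\triangle k_g=0$ forces the combination $\tfrac{1}{24}|R_g|^2-\tfrac16|\mathrm{Ric}_g|^2$ to be constant on $M$.

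Next I would exploit the explicit structure of $\omega$ on the total space $E\subset M$. On the dense contractible set $\Omega\subset M_0$ the bundle $L_0$ trivializes, so $h_0(u,u)$ becomes an honest function of the fibre coordinate and $\omega=\Pi_0^{*}\omega_0+\tfrac{\sqrt{-1}}{2\pi}\partial\overline{\partial}F(h_0(u,u))$ is exactly the fibration (Calabi) ansatz analysed in Theorem~\ref{apth:3.3}. Feeding this ansatz into the formulas \eqref{e1.4} expresses $k_g$ and $a_2^{(g)}$ as explicit differential expressions in $F$ (its derivatives in the variable $\rho=h_0(u,u)$) and in the curvature of $(M_0,\omega_0)$. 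The constancy of $k_g$ then becomes an ordinary differential equation for $F$; the boundary data needed to solve it come from the requirement that the metric extend smoothly across the zero section $M_0$ and across the divisor at infinity $\mathbb{P}(E)=M\setminus E$, that is, from hypothesis $(\mathrm{i})$ together with the smoothness of the compactification $M=\mathbb{P}(E\oplus\mathcal{O}_{M_0})$. Solving this boundary value problem should pin down $F(\rho)=\log(1+c\rho)$ with $c>0$, the logarithmic growth as $\rho\to\infty$ being precisely what is compatible with a smooth Fubini--Study-type behaviour along $\mathbb{P}(E)$.

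Finally, with $F$ now determined, I would substitute back into the constancy of $a_2^{(g)}$. Because the fibre contribution is fixed, the surviving identity constrains only the curvature tensor of $(M_0,\omega_0)$, and combined with the already-established constant-scalar-curvature property it forces $(M_0,\omega_0)$ to be a complex space form of positive constant holomorphic sectional curvature; together with compactness and $c_1(M_0)>0$ (for $d>1$) this yields $M_0\cong\mathbb{CP}^d$ with a multiple of the Fubini--Study metric and $L_0$ the corresponding positive power of the hyperplane bundle. Propagating this back through the ansatz shows that the total metric $g$ on $M$ itself has constant holomorphic sectional curvature, so by the uniformization of (simply connected) compact Kähler manifolds of constant holomorphic sectional curvature $M\cong\mathbb{CP}^{d+r}$, the totally geodesic zero section recovering $M_0\cong\mathbb{CP}^d$ consistently. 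The main obstacle I anticipate is this last step: turning the constancy of the second coefficient $a_2^{(g)}$, once the fibre data has been removed, into the clean statement that the base (and hence the total space) has constant holomorphic sectional curvature, and simultaneously controlling the fibre integrations and the two-sided boundary conditions that make the ordinary differential equation for $F$ well posed.
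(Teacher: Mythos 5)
There is a genuine gap, and it sits exactly where you flagged your ``main obstacle'': your final step asks the constancy of $a_2^{(g)}$ (with the fibre data removed) to force $(M_0,\omega_0)$ to be a complex space form, but this implication is false. Constancy of $a_1^{(g_0)}$ and $a_2^{(g_0)}$ means only that $k_{g_0}$ and the combination $\frac{1}{3}\triangle k_{g_0}+\frac{1}{24}|R_{g_0}|^2-\frac{1}{6}|\mathrm{Ric}_{g_0}|^2+\frac{1}{8}k_{g_0}^2$ are constant; any compact homogeneous Hodge manifold (e.g.\ $\mathbb{CP}^1\times\mathbb{CP}^1$ with the product Fubini--Study metric, which satisfies all the hypotheses on $M_0$, including the dense contractible chart) has \emph{all} coefficients $a_j$ constant without having constant holomorphic sectional curvature. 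So no amount of massaging two constant coefficients will yield $M_0\cong\mathbb{CP}^d$. The paper avoids this by using the full strength of hypothesis $(\mathrm{iii})$, not just the first two expansion coefficients: via the dense chart and Theorem \ref{Th:1.2}$(\mathrm{III})$ (with $\lambda=-1$), constancy of $\epsilon_{mg}$ for all $m$ pins down the \emph{exact} base Bergman functions $\epsilon_{mg_0}=\prod_{j=1}^d(m+j)$. Integrating over $M_0$ and invoking the Hirzebruch--Riemann--Roch formula together with Kodaira vanishing (this is where $c_1(M_0)>0$ enters for $d>1$) gives $\dim H^0(M_0,L_0^m)=\frac{1}{d!}\prod_{j=1}^d(m+j)$ and $\int_{M_0}c_1(L_0)^d=1$, so Kobayashi--Ochiai's numerical characterization of projective space \cite{Kobayashi-Ochiai} yields $M_0\cong\mathbb{CP}^d$ --- an algebraic/Hilbert-polynomial argument, not a curvature-pinching one. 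The step $M\cong\mathbb{CP}^{d+r}$ is then obtained by showing $\mathrm{Ric}_g=(n+1)\omega$ (using Corollary 4.2 of \cite{Arezzo-Loi} to normalize $\omega_0=\omega_{FS}$) and applying Kobayashi--Ochiai again, rather than your constant-holomorphic-sectional-curvature uniformization, which was never established.

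A secondary inaccuracy: your claim that the scalar-curvature ODE plus two-sided boundary conditions pins down $F(\rho)=\log(1+c\rho)$ is only half right. Theorem \ref{apth:3.3}$(\mathrm{III})$ (selected because compactness of $M$ makes the fibre metric incomplete --- not because of boundary data at the divisor at infinity) gives $F(\rho)=-\frac{1}{\lambda}\log(1+c\rho)$ with $\lambda=-1$ directly when $d>1$, but for $d=1$ it leaves a free exponent $A\le\lambda=-1$, i.e.\ $F(\rho)=-\frac{1}{A}\log(1+c\rho)$. The paper fixes $A=-1$ only \emph{after} proving $M_0\cong\mathbb{CP}^1$ (positivity of $\mathrm{Ric}_{g_0}$, simple connectedness by \cite{Kobayashi-1961}, uniformization), via the integrality argument $2a_1^{(g_0)}=\int_{M_0}c_1(M_0)/\int_{M_0}c_1(L_0)=2/k$ with $k\in\mathbb{N}$ and $2a_1^{(g_0)}\ge 2$, forcing $k=1$ and hence $A=-1$. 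No local analysis of the ansatz alone determines $A$ in this case, so your proposed boundary-value-problem route would not close either branch of the argument.
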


Denote $\mathcal{O}_{\mathbb{CP}^d}(1)$ as the hyperplane line bundle of the $d$-dimensional complex projective space $\mathbb{CP}^d$, $\omega_{FS}$ as the standard Fubini-Study form on $\mathbb{CP}^d$. Let $(\mathcal{O}_{\mathbb{CP}^d}(1), h_{FS})$ be the quantum line bundle over the complex projective space $(\mathbb{CP}^d,\omega_{FS})$.
For a given $k\in \mathbb{N}_{+}$, let  $L=\mathcal{O}_{\mathbb{CP}^d}(k)$ and $h=h^k_{FS}$ be  the $k$-th tensor power of $\mathcal{O}_{\mathbb{CP}^d}(1)$ and $h_{FS}$, respectively.
Write $(\mathcal{O}_{\mathbb{CP}^n}(-k),h^{-k}_{FS})$ as the dual bundle of $(\mathcal{O}_{\mathbb{CP}^n}(k),h^k_{FS})$.
Let $(L_0,M_0,h_0)=(\mathcal{O}_{\mathbb{CP}^1}(k),\mathbb{CP}^1,h_{FS}^k)$ in Theorem \ref{Th:1.3}, using $\epsilon_{m g_0}=m+\frac{1}{k}$, we obtain the following Corollary \ref{Cor:1.4}.

\begin{Corollary}\label{Cor:1.4}{For given positive integers $k, r\in\mathbb{N}_{+}$, let
\begin{equation*}
 \CD
 L:= p^{*}\mathcal{O}_{\mathbb{CP}^1}(k) @> >> L_0:=\mathcal{O}_{\mathbb{CP}^1}(k) \\
  @V \Pi VV @V \pi_0 VV  \\
 \oplus_{i=1}^r\mathcal{O}_{\mathbb{CP}^1}(-k) @>p>> M_0:=\mathbb{CP}^1
\endCD,
\end{equation*}
where
 $$p: \oplus_{i=1}^r\mathcal{O}_{\mathbb{CP}^1}(-k)\rightarrow \mathbb{CP}^1$$
 and
 $$\Pi:p^{*}\mathcal{O}_{\mathbb{CP}^1}(k)\rightarrow \oplus_{i=1}^r\mathcal{O}_{\mathbb{CP}^1}(-k)$$
 are the bundle projections.

$(\mathrm{I})$ For $kr>1$, set
$$h=\oplus_{i=1}^rh_{FS}^{-k},\;\lambda=\frac{k(r+1)}{kr-1}, $$
$$M=\left\{v\in \oplus_{i=1}^r\mathcal{O}_{\mathbb{CP}^1}(-k): h(v,v)<1\right\} $$
and
$$h_F(u,u)=(1-h(\Pi(u),\Pi(u)))^{\lambda}\times (p^{*}h_{FS}^k)(u,u),\;u\in L|_{M}.$$
Let $g_F$ be the K\"{a}hler metric associated to the K\"{a}hler form $\omega_F:=c_1(L,h_F)$ on $M$, then $(L|_{M},h_F)$ is a quantum line bundle over $(M,g_F)$, and metrics $mg_F$ are balanced metrics on $M$ for all $m\geq r$.

$(\mathrm{II})$ For $c>0$, $k=r=1$, $M=\mathcal{O}_{\mathbb{CP}^1}(-1)$, let $g_F$ be the K\"{a}hler metric associated to the K\"{a}hler form $\omega_F:=c_1(L,h_F)$ on $M$, where
 $$ h_F(u,u)=e^{-c\times h_{FS}^{-1}(\Pi(u),\Pi(u))}\times (p^{*}h_{FS})(u,u),\;u\in L.$$
Then $(L,h_F)$ is a quantum line bundle over $(M,g_F)$, and metrics $mg_F$ are balanced metrics on $M$ for all $m\geq 1$.
}\end{Corollary}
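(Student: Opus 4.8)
The plan is to read Corollary \ref{Cor:1.4} as the direct specialization of Theorem \ref{Th:1.3} to the base $(L_0,M_0,h_0)=(\mathcal{O}_{\mathbb{CP}^1}(k),\mathbb{CP}^1,h_{FS}^k)$, so that $d=\dim_{\mathbb{C}}M_0=1$ and $n=d+r=1+r$. First I would check the four hypotheses of Theorem \ref{Th:1.3} in this setting. Since $\mathbb{CP}^1$ is compact, $\mathcal{H}_m^2(\mathbb{CP}^1)=H^0(\mathbb{CP}^1,\mathcal{O}(mk))$ has dimension $mk+1\geq 1$ for every $m\geq 1$, so (i) holds with $m_0=0$; deleting one point from $\mathbb{CP}^1$ leaves $\mathbb{C}$, a dense open contractible set off which $\mathcal{O}(k)$ is trivial, which gives (ii). For (iii) I would verify that $\omega_F:=c_1(L,h_F)>0$ for the prescribed weight $h_F$ (this simultaneously establishes the ``quantum line bundle'' part of the conclusion), and for (iv) that the fibre metric of $g_F$ is complete, which for the ball case follows from the Poincar\'e--type blow-up of the weight $-\log(1-\rho)$ as $\rho\to1$. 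I would also record the input $\epsilon_{mg_0}=m+\tfrac1k$, the Bergman function of $(\mathcal{O}_{\mathbb{CP}^1}(mk),h_{FS}^{mk})$, quoted in the text preceding the corollary.

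For part (I), $M$ is the ball subbundle, so the relevant case is $d=1$ and equation \eqref{e1.12}. Writing $\rho=h(\Pi u,\Pi u)$ and comparing the weight $h_F=(1-\rho)^{\lambda}\,(p^{*}h_{FS}^k)$ of the corollary with the defining form \eqref{e1.8}, I read off $e^{-F(\rho)}=(1-\rho)^{\lambda}$, hence $F(\rho)=-\lambda\log(1-\rho)$. This is exactly the shape $F(\rho)=-\tfrac1A\log(1-\rho)$ in \eqref{e1.12} with
\[
A=\frac{1}{\lambda}=\frac{kr-1}{k(r+1)}.
\]
The one genuine check is the compatibility constraint forced by \eqref{e1.12}, namely $\epsilon_{mg_0}=m+r-(1+r)A$: substituting the value of $A$ gives $r-(1+r)A=r-\tfrac{kr-1}{k}=\tfrac1k$, so the right side equals $m+\tfrac1k$, agreeing with the known Bergman function $\epsilon_{mg_0}$. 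Thus the data of the corollary fits \eqref{e1.12} precisely, and the positivity requirement $A>0$ is equivalent to $kr>1$, the standing hypothesis of part (I).

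It then remains to pin down the range of $m$. From the value of $A$ one computes $(1+r)A=r-\tfrac1k$, and since $r-1\le r-\tfrac1k<r$, the smallest integer exceeding $\max\{m_0,(1+r)A\}=\max\{0,\,r-\tfrac1k\}$ is $r$; hence the threshold $m\geq m_1>\max\{m_0,(1+r)A\}$ of \eqref{e1.12}, read for integer $m$, is exactly $m\geq r$. On this range Theorem \ref{Th:1.3}(I) delivers $\epsilon_{mg_F}=\prod_{j=1}^{1+r}(m-jA)$, a quantity independent of the point of $M$ whose factors satisfy $m-jA\geq m-(1+r)A>0$; hence $\epsilon_{mg_F}$ is a positive constant and $mg_F$ is balanced for all $m\geq r$, as claimed.

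For part (II) I would instead invoke the total-space case $M=L_0^{*\oplus r}$ with $k=r=1$, governed by \eqref{e1.14}. The requirement $d=1$ holds, $\epsilon_{mg_0}=m+1=m+r$ matches the second line of \eqref{e1.14}, and the weight $h_F=e^{-c\,h_{FS}^{-1}(\Pi u,\Pi u)}(p^{*}h_{FS})$ corresponds via \eqref{e1.8} to $F(\rho)=c\rho$ with $c>0$; I would again check $\omega_F>0$ and fibre completeness to secure hypotheses (iii)--(iv). The threshold $m\geq m_1>m_0=0$ then reads $m\geq1$, and on this range \eqref{e1.14} yields the constant $\epsilon_{mg_F}=m^{1+r}=m^2>0$, so $mg_F$ is balanced for every $m\geq1$. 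Overall the argument is a verification rather than a construction, so there is no deep obstacle; the only points demanding care are the algebraic identity $r-(1+r)A=\tfrac1k$ that forces agreement with the known $\epsilon_{mg_0}=m+\tfrac1k$, the integer-threshold bookkeeping that converts $m>(1+r)A$ into $m\geq r$, and the preliminary checks of positivity of $\omega_F$ and of fibre completeness that license the application of Theorem \ref{Th:1.3}.
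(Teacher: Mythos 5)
Your proposal is correct and follows essentially the same route as the paper: both prove the corollary by specializing Theorem \ref{Th:1.3} to $(L_0,M_0,h_0)=(\mathcal{O}_{\mathbb{CP}^1}(k),\mathbb{CP}^1,h_{FS}^k)$ with $d=1$, matching $F(\rho)=-\lambda\log(1-\rho)$ to \eqref{e1.12} via $A=1/\lambda=\tfrac{kr-1}{k(r+1)}$ (resp.\ $F(\rho)=c\rho$ to \eqref{e1.14}). The only difference is one of emphasis: the paper spends its proof deriving $\epsilon_{mg_0}=m+\tfrac1k$ explicitly from the standard trivializations and the reproducing kernel $K_m(z,\bar z)=(m+\tfrac1k)(1+|z|^2)^{mk}$ and then simply invokes Theorem \ref{Th:1.3}, whereas you quote that value and instead write out the verifications the paper leaves implicit --- the identity $r-(1+r)A=\tfrac1k$, the positivity $A>0\iff kr>1$, and the integer-threshold computation $(1+r)A=r-\tfrac1k$ giving $m\geq r$ --- all of which are correct.
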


\begin{Remark}
Recently, Aghedu-Loi \cite{Aghedu-Loi} has get the same conclusion as the part $(\mathrm{II})$ of Corollary \ref{Cor:1.4}.
\end{Remark}

The organization of this paper is as follows. In  Section 2, we study regular quantizations of trivial Hermitian holomorphic vector bundles and their ball subbundles.   In Section 3, Section 4 and Section 5,  we give proofs of Theorem \ref{Th:1.3},  Theorem \ref{Th:1.4} and Corollary \ref{Cor:1.4}, respectively.

 \setcounter{equation}{0}
\section{Regular quantizations of trivial vector bundles and their ball subbundles}

  From Lemma 2.4 and Lemma 2.5 of \cite{Feng}, we have the following lemma.

\begin{Lemma}\label{Le:3.1}{Let $g_{\phi}$ be a K\"{a}hler  metric  on a domain $\Omega\subset \mathbb{C}^d$ (that's the connected open set of $\mathbb{C}^d$) associated with the K\"{a}hler form $\omega_{\phi}=\frac{\sqrt{-1}}{2\pi}\partial\overline{\partial}\phi$, where $\phi$ is globally defined the K\"{a}hler potential on the domain $\Omega$.
For $0\neq \lambda\in \mathbb{R}$, set
$$ M:=\left\{(z,w)\in \Omega\times\mathbb{C}^{d_0}: \|w\|^2<e^{-\lambda\phi(z)}\right\}\;\text{or}\;\;\Omega\times\mathbb{C}^{d_0}.$$

Let $g_F$ be a K\"{a}hler  metric  on the domain $M$  associated with the K\"{a}hler form $\omega_F=\frac{\sqrt{-1}}{2\pi}\partial\overline{\partial}\Phi_F$, here
$$\Phi_F(z,w):=\phi(z)+F(\lambda\phi(z)+\log\|w\|^2).$$

Let $k_{g}$, $\Delta_{g}$, $\mathrm{Ric}_{g}$ and $R_{g}$ be the scalar curvature, the Laplace, the Ricci curvature and the curvature tensor with respect to the metric $g=g_{\phi}$ or $g_F$, respectively. Put
\begin{equation*}
 a_{1}^{(g_{\phi})}=\frac{1}{2}k_{g_{\phi}},\;a_{2}^{(g_{\phi})}=\frac{1}{3}\triangle_{g_{\phi}}k_{g_{\phi}}+\frac{1}{24}|R_{g_{\phi}}|^2-\frac{1}{6}|\mathrm{Ric}_{g_{\phi}}|^2+\frac{1}{8}k_{g_{\phi}}^2
\end{equation*}
and
\begin{equation*}
  a_1^{(g_F)}=\frac{1}{2}k_{g_F},\;a_2^{(g_F)}=\frac{1}{3}\triangle_{g_F} k_{g_F}+\frac{1}{24}|R_{g_F}|^2-\frac{1}{6}|\mathrm{Ric}_{g_F}|^2+\frac{1}{8}k_{g_F}^2.
\end{equation*}

Let
$$t=\lambda\phi(z)+\log \|w\|^2,\;x=F'(t),\;\varphi(x)=F''(t)\;,$$
\begin{equation*}
  \sigma=\frac{\left((1+\lambda x)^{d}x^{d_0-1}\varphi\right)'}{(1+\lambda x)^{d}x^{d_0-1}}
\end{equation*}
and
\begin{equation*}
 \chi=\frac{d_0(d_0-1)}{x}-\frac{\left((1+\lambda x)^{d}x^{d_0-1}\varphi\right)''}{(1+\lambda x)^{d}x^{d_0-1}}.
\end{equation*}

We have the following conclusions.

$(\mathrm{i})$
\begin{equation}\label{e5.54}
  k_{g_F}=\frac{1}{1+\lambda x}k_{g_{\phi}}+\frac{d_0(d_0-1)}{x}-\frac{\left((1+\lambda x)^{d}x^{d_0-1}\varphi\right)''}{(1+\lambda x)^{d}x^{d_0-1}},
\end{equation}
\begin{eqnarray}
\label{e5.55} |\mathrm{Ric}_{g_F}|^2  &=& \frac{|\mathrm{Ric}_{g_{\phi}}|^2-2\lambda\sigma k_{g_{\phi}}+d\lambda^2\sigma^2}{(1+\lambda x)^2}
    +(d_0-1)\left(\frac{\sigma-d_0}{x}\right)^2+(\sigma')^2,
\end{eqnarray}
\begin{eqnarray}
\nonumber \triangle_{g_F} k_{g_F}  &=&\frac{1}{(1+\lambda x)^2}(\triangle_{g_{\phi}}k_{g_{\phi}})-\frac{\left(\lambda(1+\lambda x)^{d-2}x^{d_0-1}\varphi\right)'}{(1+\lambda x)^{d}x^{d_0-1}}k_{g_{\phi}} \\
\label{e5.56}   & &+\frac{\left((1+\lambda x)^{d}x^{d_0-1}\varphi\chi'\right)'}{(1+\lambda x)^{d}x^{d_0-1}}
\end{eqnarray}
and
\begin{eqnarray}
\nonumber   & &  |R_{g_F}|^2\\
\nonumber   &=&\frac{1}{(1+\lambda x)^2}|R_{g_{\phi}}|^2-\frac{4\lambda^2\varphi}{(1+\lambda x)^3}k_{g_{\phi}}+\frac{2d(d+1)\lambda^4\varphi^2}{(1+\lambda x)^4}
+4d\lambda^2\left(\left(\frac{\varphi}{1+\lambda x}\right)'\right)^2\\
\label{e5.57}   & &+\left(\varphi''\right)^2+(d_0-1)\left\{4d \lambda^2 \left(\frac{\varphi}{x(1+\lambda x)}\right)^2+4\left(\left(\frac{\varphi}{x}\right)'\right)^2+2d_0\left(\frac{\varphi-x}{x^2}\right)^2\right\}.
\end{eqnarray}

$(\mathrm{ii})$

\begin{equation}\label{e5.48}
   a_1^{(g_F)}=\frac{a_{1}^{(g_{\phi})}}{1+\lambda x}+\frac{d_0(d_0-1)}{2x}-\frac{\left((1+\lambda x)^{d}x^{d_0-1}\varphi\right)''}{2(1+\lambda x)^{d}x^{d_0-1}}
\end{equation}
and
\begin{eqnarray}
\nonumber   a_2^{(g_F)}  &=& \frac{a_{2}^{(g_{\phi})}}{(1+\lambda x)^2}+\left\{\frac{\chi}{2(1+\lambda x)}+\frac{\lambda ^2\varphi}{(1+\lambda x)^3}\right\}a_{1}^{(g_{\phi})} \\
\nonumber   & & +\frac{1}{24}\left\{8(\varphi\chi')'+8\left(\frac{d\lambda}{1+\lambda x}+\frac{d_0-1}{x}\right)\varphi\chi'+3\chi^2-4(\sigma')^2+(\varphi'')^2\right.\\
\nonumber  & &\left.
+4d\lambda^2\left(\left(\frac{\varphi}{1+\lambda x}\right)'\right)^2
-\frac{4d\lambda^2}{(1+\lambda x)^2}\sigma^2+\frac{2d(d+1)\lambda^4}{(1+\lambda x)^4}\varphi^2\right\}\\
\label{e5.49}   & &+\frac{d_0-1}{6}\left\{\frac{d\lambda^2\varphi^2}{x^2(1+\lambda x)^2}+\left(\left(\frac{\varphi}{x}\right)'\right)^2+\frac{d_0}{2}\frac{(\varphi-x)^2}{x^4}
-\frac{(\sigma-d_0)^2}{x^2}\right\}.
\end{eqnarray}

$(\mathrm{iii})$ If $\varphi(x)=x+\lambda x^2$, then

\begin{equation}\label{Le3.1.1}
   a_1^{(g_F)}=-\frac{n(n+1)\lambda}{2}+\frac{1}{1+\lambda x}\left(\frac{d(d+1)\lambda}{2}+a_{1}^{(g_{\phi})}\right).
\end{equation}

\begin{eqnarray}
\nonumber   a_2^{(g_F)} &=& \frac{(n-1)n(n+1)(3n+2)\lambda^2}{24}+\frac{1}{1+\lambda x}\frac{(n-1)(n+2)\lambda}{2}\left(\frac{d(d+1)\lambda}{2}+a_{1}^{(g_{\phi})}\right)\\
\label{Le3.1.2} &&+\frac{1}{(1+\lambda x)^2}\left(a_{2}^{(g_{\phi})}+\frac{(d-1)(d+2)\lambda}{2}a_{1}^{(g_{\phi})}+\frac{(d-1)d(d+1)(3d+10)\lambda^2}{24}\right).
\end{eqnarray}
So, both $ a_1^{(g_F)}$ and $ a_2^{(g_F)}$ are constants if and only if
\begin{equation}\label{Le3.1.3}
  \left\{\begin{array}{l}
          a_{1}^{(g_{\phi})}=-\frac{d(d+1)\lambda}{2}, \\\\
          a_{2}^{(g_{\phi})}=\frac{(d-1)d(d+1)(3d+2)\lambda^2}{24}.
         \end{array}
  \right.
\end{equation}
}\end{Lemma}

\begin{Remark}
For other proofs of the formula \eqref{e5.54}, see  \cite{Hwang-Singer} and \cite{Fu-Yau-Zhou}.
\end{Remark}

\begin{proof}[Proof]

We only give a proof for the part $(\mathrm{i})$, because the part $(\mathrm{ii})$ and the part  $(\mathrm{iii})$  are derived directly from the part $(\mathrm{i})$.

 For the case of $\lambda>0$, let
\begin{equation*}
  \widetilde{\phi}=\lambda\phi,\;\widetilde{F}=\lambda F,\;\omega_{\widetilde{\phi}}=\lambda\omega_{\phi},\;\omega_{\widetilde{F}}=\lambda\omega_F,\;\widetilde{x}=\widetilde{F}',\;
  \widetilde{\varphi}=\widetilde{F}'',
\end{equation*}
\begin{equation*}
  \widetilde{\sigma}=\frac{\left((1+\widetilde{x})^{d}\widetilde{x}^{d_0-1}\widetilde{\varphi}\right)'}{(1+\widetilde{x})^{d}\widetilde{x}^{d_0-1}},\;
  \widetilde{\chi}=\frac{d_0(d_0-1)}{\widetilde{x}}-\frac{\left((1+\widetilde{x})^{d}\widetilde{x}^{d_0-1}\widetilde{\varphi}\right)''}{(1+\widetilde{x})^{d}\widetilde{x}^{d_0-1}}.
\end{equation*}
Then
\begin{equation}\label{Le3.1.8}
  k_{g_F}=\lambda k_{g_{\widetilde{F}}},\;|\mathrm{Ric}_{g_F}|^2=\lambda^2|\mathrm{Ric}_{g_{\widetilde{F}}}|^2,\;\triangle_{g_F} k_{g_F}=\lambda^2\triangle_{g_{\widetilde{F}}} k_{g_{\widetilde{F}}},\;|R_{g_F}|^2=\lambda^2|R_{g_{\widetilde{F}}}|^2,
\end{equation}
\begin{equation}\label{Le3.1.9}
  k_{g_{\phi}}=\lambda k_{g_{\widetilde{\phi}}},\;|\mathrm{Ric}_{g_{\phi}}|^2=\lambda^2|\mathrm{Ric}_{g_{\widetilde{\phi}}}|^2,\;\triangle_{g_{\phi}} k_{g_{\phi}}=\lambda^2\triangle_{g_{\widetilde{\phi}}} k_{g_{\widetilde{\phi}}},\;|R_{g_{\phi}}|^2=\lambda^2|R_{g_{\widetilde{\phi}}}|^2
\end{equation}
and
\begin{equation}\label{Le3.1.10}
  \widetilde{x}=\lambda x,\;\widetilde{\varphi}(\widetilde{x})=\lambda\varphi(x),\;\widetilde{\sigma}(\widetilde{x})=\sigma(x),\;\widetilde{\chi}(\widetilde{x})=\frac{1}{\lambda}\chi(x),\;
  \widetilde{\sigma}'(\widetilde{x})=\frac{1}{\lambda}\sigma'(x),\;\widetilde{\chi}'(\widetilde{x})=\frac{1}{\lambda^2}\chi'(x).
\end{equation}
where  $g_{\widetilde{\phi}}$ and $g_{\widetilde{F}}$ are K\"{a}hler metrics associated with the K\"{a}hler form $\omega_{\widetilde{\phi}}$ and $\omega_{\widetilde{F}}$, respectively.

By Lemma 2.4 and Lemma 2.5 of \cite{Feng}, we have
\begin{equation}\label{Le3.1.4}
  k_{g_{\widetilde{F}}}=\frac{1}{1+\widetilde{x}}k_{g_{\widetilde{\phi}}}+\frac{d_0(d_0-1)}{\widetilde{x}}-\frac{\left((1+\widetilde{x})^d\widetilde{x}^{d_0-1}\widetilde{\varphi}\right)''}{(1+\widetilde{x})^d\widetilde{x}^{d_0-1}},
\end{equation}
\begin{eqnarray}
\nonumber |\mathrm{Ric}_{g_{\widetilde{F}}}|^2  &=& \frac{1}{(1+\widetilde{x})^2}|\mathrm{Ric}_{g_{\widetilde{\phi}}}|^2-\frac{2\widetilde{\sigma}}{(1+\widetilde{x})^2} k_{g_{\widetilde{\phi}}}+(\widetilde{\sigma}')^2 \\
\label{Le3.1.5}   & & +d\left(\frac{\widetilde{\sigma}}{1+\widetilde{x}}\right)^2+(d_0-1)\left(\frac{\widetilde{\sigma}-d_0}{\widetilde{x}}\right)^2,
\end{eqnarray}
\begin{eqnarray}
\nonumber  & & \triangle_{g_{\widetilde{F}}} k_{g_{\widetilde{F}}}\\
\label{Le3.1.6}   &=& \frac{1}{(1+\widetilde{x})^2}(\triangle_{g_{\widetilde{\phi}}}k_{g_{\widetilde{\phi}}})- \frac{\left(\widetilde{\varphi}(1+\widetilde{x})^{d-2}\widetilde{x}^{d_0-1}\right)'}{(1+\widetilde{x})^d\widetilde{x}^{d_0-1}}k_{g_{\widetilde{\phi}}}+\frac{\left(\widetilde{\varphi}\widetilde{\chi}'(1+\widetilde{x})^d\widetilde{x}^{d_0-1}\right)'}{(1+\widetilde{x})^d\widetilde{x}^{d_0-1}}
\end{eqnarray}
and
\begin{eqnarray}
\nonumber    & &|R_{g_{\widetilde{F}}}|^2 \\
\nonumber     &=& \frac{1}{(1+\widetilde{x})^2}|R_{g_{\widetilde{\phi}}}|^2-\frac{4\widetilde{\varphi}}{(1+\widetilde{x})^3}k_{g_{\widetilde{\phi}}}+2d(d+1)\frac{\widetilde{\varphi}^2}{(1+\widetilde{x})^4}+4d\left(\left(\frac{\widetilde{\varphi}}{1+\widetilde{x}}\right)'\right)^2 \\
\label{Le3.1.7}   & & +\left(\widetilde{\varphi}''\right)^2+(d_0-1)\left\{4d \left(\frac{\widetilde{\varphi}}{\widetilde{x}(1+\widetilde{x})}\right)^2+4\left(\left(\frac{\widetilde{\varphi}}{\widetilde{x}}\right)'\right)^2+2d_0\left(\frac{\widetilde{\varphi}-\widetilde{x}}{\widetilde{x}^2}\right)^2\right\}.
\end{eqnarray}
The above \eqref{Le3.1.4}, \eqref{Le3.1.5}, \eqref{Le3.1.6} and \eqref{Le3.1.7} combine with  \eqref{Le3.1.8}, \eqref{Le3.1.9} and \eqref{Le3.1.10}, we get \eqref{e5.54}, \eqref{e5.55}, \eqref{e5.56} and \eqref{e5.57}.

For the case of $\lambda<0$, since $ k_{g_F}$, $|\mathrm{Ric}_{g_F}|^2$, $\triangle_{g_F} k_{g_F}$ and $|R_{g_F}|^2$ are rational functions in $\lambda$ under given any $(z,x)\in M$, it follows that  \eqref{e5.54}, \eqref{e5.55}, \eqref{e5.56} and \eqref{e5.57} still hold for $\lambda<0$.

\end{proof}

Using Lemma \ref{Le:3.1}, We get sufficient and  necessary conditions for both the first two coefficients of the Bergman function expansion for $(M,g_F)$ to be constants.

\begin{Theorem}\label{apth:3.3}{Under assumptions of Lemma \ref{Le:3.1}, let $F(-\infty)=0$ and $n=d+d_0$.

 $(\mathrm{I})$ For
$$ M=\left\{(z,w)\in \Omega\times\mathbb{C}^{d_0}: \|w\|^2<e^{-\lambda\phi(z)}\right\},$$
if the fibre metric of $g_F$ is complete, then both $ a_1^{(g_F)}$ and $ a_2^{(g_F)}$ are constants on $M$ if and only if

 $(\mathrm{i})$ For $d=1$,
\begin{equation}\label{e5.1}
 \left\{\begin{array}{cl}
          F(t) &= -\frac{1}{A}\log(1-e^t), \;A>0,\\\\
          a_{1}^{(g_{\phi})} & =d_0\lambda-n A, \;\lambda>0,\\\\
          a_{2}^{(g_{\phi})} &= 0.
        \end{array}
 \right.
\end{equation}

 $(\mathrm{ii})$ For $d>1$,
 \begin{equation}\label{e5.2}
 \left\{\begin{array}{cl}
          F(t) &= -\frac{1}{\lambda}\log(1-e^t),\lambda>0, \\\\
          a_{1}^{(g_{\phi})} & =-\frac{1}{2}d(d+1)\lambda, \\\\
          a_{2}^{(g_{\phi})} &= \frac{1}{24}(d-1)d(d+1)(3d+2)\lambda^2.
        \end{array}
 \right.
\end{equation}

 $(\mathrm{II})$ For $ M=\Omega\times\mathbb{C}^{d_0}$, if the fibre metric of $g_F$ is complete, then both $ a_1^{(g_F)}$ and $ a_2^{(g_F)}$ are constants on $M$ if and only if
\begin{equation}\label{e5.3}
 \left\{\begin{array}{cl}
            d&= 1,\\\\
          F(t) &= ce^t,\;c>0, \\\\
          a_{1}^{(g_{\phi})} & =d_0\lambda,\;\lambda>0, \\\\
          a_{2}^{(g_{\phi})} &= 0.
        \end{array}
 \right.
\end{equation}

$(\mathrm{III})$ For $ M=\Omega\times\mathbb{C}^{d_0}$, if the fibre metric of $g_F$ is incomplete, then both $ a_1^{(g_F)}$ and $ a_2^{(g_F)}$ are constants on $M$ if and only if

$(\mathrm{i})$ For $d=1$,
\begin{equation}\label{e5.4}
 \left\{\begin{array}{cl}
          F(t) &= -\frac{1}{A}\log(1+ce^t),A<0,\lambda\geq A,c>0, \\\\
          a_{1}^{(g_{\phi})} & =d_0\lambda-nA, \\\\
          a_{2}^{(g_{\phi})} &= 0.
        \end{array}
 \right.
\end{equation}

 $(\mathrm{ii})$ For $d>1$,
 \begin{equation}\label{e5.5}
 \left\{\begin{array}{cl}
          F(t) &= -\frac{1}{\lambda}\log(1+ce^t),\lambda<0,c>0, \\\\
          a_{1}^{(g_{\phi})} & =-\frac{1}{2}d(d+1)\lambda, \\\\
          a_{2}^{(g_{\phi})} &= \frac{1}{24}(d-1)d(d+1)(3d+2)\lambda^2.
        \end{array}
 \right.
\end{equation}

}\end{Theorem}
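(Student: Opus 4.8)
The plan is to read off everything from the explicit formulas \eqref{e5.48} and \eqref{e5.49} of Lemma \ref{Le:3.1}, in which $a_1^{(g_F)}$ and $a_2^{(g_F)}$ appear as the base quantities $a_1^{(g_\phi)},a_2^{(g_\phi)}$ (functions of $z\in\Omega$ only) plus terms depending solely on the fibre variable through $t=\lambda\phi(z)+\log\|w\|^2$, equivalently through $x=F'(t)$ and $\varphi(x)=F''(t)$. The decisive observation is that for each fixed $z$ the variable $t$ sweeps out an interval independent of $z$: in the ball case $\|w\|^2\in(0,e^{-\lambda\phi(z)})$ gives $t\in(-\infty,0)$, while for $M=\Omega\times\mathbb{C}^{d_0}$ one gets $t\in(-\infty,+\infty)$; since $\varphi=F''>0$ the map $t\mapsto x$ is a diffeomorphism onto a fixed $x$-interval. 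Because $\tfrac{1}{1+\lambda x}$ is non-constant in $x$, demanding $a_1^{(g_F)}\equiv c_1$ in \eqref{e5.48} forces $a_1^{(g_\phi)}$ to be a constant $b_1$ on the connected base and reduces the condition to the single one-variable identity $\tfrac{\chi}{2}=c_1-\tfrac{b_1}{1+\lambda x}$, with $\chi$ as in \eqref{e5.54}. Feeding $a_1^{(g_\phi)}\equiv b_1$ into \eqref{e5.49} and repeating the argument then forces $a_2^{(g_\phi)}$ to be a constant $b_2$ and yields a second one-variable identity in $x$.

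Next I would solve these fibre equations. Writing $\Psi:=(1+\lambda x)^{d}x^{d_0-1}\varphi$, the identity $\chi=2c_1-\tfrac{2b_1}{1+\lambda x}$ is, by the definition of $\chi$ in \eqref{e5.54}, a linear second-order ODE for $\Psi$; integrating once produces $\sigma=\Psi'/[(1+\lambda x)^{d}x^{d_0-1}]$ (one constant) and a second integration produces $\Psi$, hence $\varphi$ (a second constant). The constants $b_1,c_1$ and the two integration constants are then pinned down by the second one-variable identity coming from $a_2^{(g_F)}\equiv c_2$, together with regularity of $g_F$ at the fibre center $w=0$ (where $t\to-\infty$, $x\to0$, and smoothness requires $\varphi\sim x$) and at the outer boundary. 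I expect the upshot to be that $\varphi$ must lie in the one-parameter family $\varphi(x)=x+Ax^{2}$. Substituting this family back into \eqref{e5.48} and \eqref{e5.49} makes $a_1^{(g_F)},a_2^{(g_F)}$ manifestly constant (a direct computation gives $\chi=\tfrac{d_0(d_0-1)\lambda-(\lambda+A)(d_0+1)d_0-\lambda A(d_0+2)(d_0+1)x}{1+\lambda x}$, of the required shape); the subcase $A=\lambda$ is exactly Lemma \ref{Le:3.1}(iii), formulas \eqref{Le3.1.1}--\eqref{Le3.1.2}. Integrating $F''=F'(1+AF')$ under $F(-\infty)=0$ then recovers the logarithmic primitive $F(t)=-\tfrac1A\log(1-e^t)$, while the degenerate flat-fibre case gives $F(t)=ce^{t}$.

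The final step is the case analysis separating the three parts. For $d=1$ the base is a complex curve, where $|R_{g_\phi}|^2=|\mathrm{Ric}_{g_\phi}|^2=k_{g_\phi}^2$ makes the curvature combination in $a_2^{(g_\phi)}$ vanish, leaving $a_2^{(g_\phi)}=\tfrac13\triangle_{g_\phi}k_{g_\phi}$, which is $0$ once $a_1^{(g_\phi)}=\tfrac12 k_{g_\phi}$ is constant; this is why $A$ stays free, $a_2^{(g_\phi)}=0$, and $a_1^{(g_\phi)}=d_0\lambda-nA$, giving \eqref{e5.1}. For $d>1$ the second one-variable identity genuinely couples $b_2$ to $b_1$ and forces \eqref{Le3.1.3}, i.e. $A=\lambda$, $a_1^{(g_\phi)}=-\tfrac12 d(d+1)\lambda$ and $a_2^{(g_\phi)}=\tfrac1{24}(d-1)d(d+1)(3d+2)\lambda^2$, giving \eqref{e5.2}. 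Completeness versus incompleteness of the fibre metric, together with the shape of the $t$-interval, then selects the admissible primitive and fixes the signs: the complete ball subbundle forces $F=-\tfrac1A\log(1-e^t)$ with $A>0,\lambda>0$; the complete $\Omega\times\mathbb{C}^{d_0}$ forces $d=1$ with the flat fibre $F=ce^t$ of \eqref{e5.3}; and the incomplete $\Omega\times\mathbb{C}^{d_0}$ admits $F=-\tfrac1A\log(1+ce^t)$ with the sign conditions of \eqref{e5.4}--\eqref{e5.5}. The converse ("if") direction is then immediate: each candidate $F$ gives $\varphi(x)=x+Ax^2$, and \eqref{e5.48}--\eqref{e5.49} (or Lemma \ref{Le:3.1}(iii) when $A=\lambda$) show $a_1^{(g_F)},a_2^{(g_F)}$ are constants.

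The hardest part is the "only if" analysis of the fibre ODEs: one must show that the second-order equation for $\Psi$, which a priori has a two-dimensional solution space for each choice of $(b_1,c_1)$, admits no solution beyond the family $\varphi=x+Ax^2$ once the $a_2$-identity and the regularity of the metric at $w=0$ and at the outer boundary are imposed. Controlling the spurious terms produced by the two integration constants and by the inhomogeneous pieces $\propto 1/x$ and $\propto 1/(1+\lambda x)$---and verifying that precisely the completeness and positivity ($1+\lambda x>0$, $\varphi>0$) conditions eliminate them in each of the regimes $d=1$ versus $d>1$ and complete versus incomplete---is where the real work lies.
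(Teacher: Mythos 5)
Your outline follows the paper's strategy (Lemma \ref{Le:3.1}, constancy of $a_1^{(g_\phi)},a_2^{(g_\phi)}$, the fibre ODE for $\Psi=(1+\lambda x)^dx^{d_0-1}\varphi$, then recovery of $F$ and the completeness/positivity case analysis), but it stops exactly at the decisive point: you only \emph{expect} that the integration constants and inhomogeneous pieces force $\varphi(x)=x+Ax^2$, and the mechanism you propose for proving this would not work. After integrating the $a_1^{(g_F)}$-equation twice, $\varphi$ has the partial-fraction form \eqref{ne3.1}, and the whole difficulty is to kill the coefficients $B_j$ (poles at $x=0$) and $C_j$ (poles at $x=-1/\lambda$). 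The $a_1$-identity alone cannot do it: its singular coefficients carry the factor $q(d,k)=(d-k)(d-k-1)$, which vanishes exactly at $k=d,d-1$, so the top two coefficients at each pole, namely $B_{d_0-1},B_{d_0-2},C_d,C_{d-1}$, are invisible to it. The paper therefore brings in the $a_2$-information through the combination $|R_{g_F}|^2-4|\mathrm{Ric}_{g_F}|^2$, which is constant once $a_1^{(g_F)},a_2^{(g_F)}$ are (since then $\triangle_{g_F}k_{g_F}=0$), and whose most singular terms have coefficients $p(d_0-1,d_0-1)B_{d_0-1}^2$ and $p(d,d)\lambda^4C_d^2$ with $p(d,k)=2d(d+1)+4d(k+1)^2+k^2(k+1)^2-4\bigl(d+(k+1)^2\bigr)(d-k)^2$ strictly positive at those indices; this forces $B_{d_0-1}=C_d=0$, then at the next order $B_{d_0-2}=C_{d-1}=0$, and only afterwards does the $a_1$-identity (now with $q\neq0$) eliminate the remaining $B_j,C_j$. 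Your proposed substitute, regularity at $w=0$ together with conditions ``at the outer boundary,'' cannot replace this step: the terms $C_j/(1+\lambda x)^j$ are smooth at the fibre centre $x=0$ and bounded (indeed decaying) toward the outer end of the $x$-interval, so neither endpoint condition excludes them; in the paper, completeness and the positivity conditions $1+\lambda F'>0$, $F''>0$ are used only \emph{after} $\varphi=x+Ax^2$ is established, to select the primitive $F$ from (3.9) of \cite{Feng2017} and fix the signs of $A$, $\lambda$, $c$ in the three cases.

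Two smaller corrections to your case analysis. First, smoothness of $g_F$ at $w=0$ is what yields $\varphi(0)=0$, $\varphi'(0)=1$ once the singular terms are gone; no outer-boundary input is involved there either. Second, for $d>1$ the constraint $A=\lambda$ is forced already by the $a_1$-identity, not by the $a_2$-identity: substituting $\varphi=x+Ax^2$ into \eqref{e5.48} produces the term $-d(d-1)(A-\lambda)/(1+\lambda x)^2$ in \eqref{ee2.17}, whose vanishing requires $d=1$ or $A=\lambda$; the relations \eqref{Le3.1.3} then merely supply the resulting values of $a_1^{(g_\phi)}$ and $a_2^{(g_\phi)}$ (and \eqref{Le3.1.3} contains no statement about $A$). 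Your $d=1$ observation that $a_2^{(g_\phi)}=0$ is automatic, via $|R_{g_\phi}|^2=|\mathrm{Ric}_{g_\phi}|^2=k_{g_\phi}^2$ in complex dimension one, is correct and consistent with \eqref{ee2.25}, and your treatment of the sufficiency direction and of the completeness dichotomy agrees with the paper; but as it stands the proposal leaves the core ``only if'' elimination unproved.
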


\begin{proof}[Proof]

Our first goal is to show that $\varphi$ is a polynomial if both $ a_1^{(g_F)}$ and $ a_2^{(g_F)}$ are constants.

By \eqref{e5.48} and \eqref{e5.49}, if both $ a_1^{(g_F)}$ and $ a_2^{(g_F)}$ are constants, then $a_{1}^{(g_{\phi})}$ and $a_{2}^{(g_{\phi})}$ are constants, thus $|R_{g_{\phi}}|^2-4|\mathrm{Ric}_{g_{\phi}}|^2$ and $|R_{g_F}|^2-4|\mathrm{Ric}_{g_F}|^2$ also are constants. From \eqref{e5.48}, $\varphi(x)$ can be written as
\begin{equation}\label{ne3.1}
  \varphi(x)=\sum_{j=0}^2A_jx^j+\sum_{j=1}^{d_0-1}\frac{B_j}{x^j}+\sum_{j=1}^{d}\frac{C_j}{(1+\lambda x)^j}.
\end{equation}

Substituting \eqref{ne3.1} into \eqref{e5.57} and \eqref{e5.55}, we have
\begin{eqnarray*}
   & & |R_{g_F}|^2-4|\mathrm{Ric}_{g_F}|^2 \\
   &=&\frac{p(d_0-1,d_0-1)B_{d_0-1}^2}{x^{2(d_0-1)+4}}+\frac{p(d,d)\lambda^4C_{d}^2}{(1+\lambda x)^{2d+4}}+\cdots,
\end{eqnarray*}
where
\begin{equation*}
p(d,k)=2d(d+1)+4d(k+1)^2+k^2(k+1)^2-4(d+(k+1)^2)(d-k)^2.
\end{equation*}

For $d_0>1$, from $p(d_0-1,d_0-1)>0$ and $p(d,d)>0$, we have $B_{d_0-1}=C_{d}=0$, so
\begin{eqnarray*}
   & & |R_{g_F}|^2-4|\mathrm{Ric}_{g_F}|^2 \\
   &=&\frac{p(d_0-1,d_0-2)B_{d_0-2}^2}{x^{2(d_0-2)+4}}+\frac{p(d,d-1)\lambda^4C_{d-1}^2}{(1+\lambda x)^{2(d-1)+4}}+\cdots,
\end{eqnarray*}
which follows that $B_{d_0-2}=0$ for $d_0>2$ and $C_{d-1}=0$ for $d>1$. Thus from \eqref{ne3.1}, $\varphi(x)$ can be written as
\begin{equation}\label{ne3.2}
  \varphi(x)=\sum_{j=0}^2A_jx^j+\sum_{j=1}^{d_0-3}\frac{B_j}{x^j}+\sum_{j=1}^{d-2}\frac{C_j}{(1+\lambda x)^j}.
\end{equation}

Let
\begin{equation*}
  S_0:=\{j:B_j\neq 0,1\leq j\leq d_0-3\},\; S_1:=\{j:C_j\neq 0,1\leq j\leq d-2\}.
\end{equation*}
If
\begin{equation*}
  S_0\neq\emptyset,\text{or}\;S_1\neq\emptyset,
\end{equation*}
let
\begin{equation*}
  k_j:=\max S_j,\;j=0,1.
\end{equation*}

Substituting \eqref{ne3.2} into \eqref{e5.48}, we obtain
\begin{equation*}
   a_1^{(g_F)}=-\frac{q(d_0-1,k_0)B_{k_0}}{2x^{k_0+2}}+\cdots
\end{equation*}
or
\begin{equation*}
   a_1^{(g_F)}=-\frac{q(d,k_1)\lambda^2C_{k_1}}{2(1+\lambda x)^{k_1+2}}+\cdots
\end{equation*}
for $k_0>0$ or $k_1>0$, respectively. Here
\begin{equation*}
  q(d,k)=(d-k)(d-k-1).
\end{equation*}
Then $B_{k_0}=0$ or $C_{k_1}=0$, this conflicts with the definitions of $k_j$, $j=0,1$. Namely, $B_j=0$ for $1\leq j\leq d_0-3$ and $C_j=0$ for $1\leq j\leq d-2$.

Now prove the rest of Theorem \ref{apth:3.3}.

Since the metric $g_F$ is defined at $t =-\infty$, it follows that $\varphi(0) = 0$ and $\varphi'(0)=1$. Hence $\varphi(x)=x+Ax^2$.

Substituting $\varphi(x)=x+Ax^2$ into \eqref{e5.48}, we have
\begin{equation}\label{ee2.17}
 2 a_1^{(g_F)}=-A(d+d_0+1)(d+d_0)+\frac{2a_{1}^{(g_{\phi})}+d(2Ad+2Ad_0-d\lambda-2d_0\lambda+\lambda)}{1+\lambda x}-\frac{d(d-1)(A-\lambda)}{(1+\lambda x)^2},
\end{equation}
it follows that
\begin{equation}\label{ee2.18}
  \left\{\begin{array}{l}
             d=1,\\\\
             \varphi(x)=x+Ax^2,\\\\
           a_{1}^{(g_{\phi})}=d_0\lambda-nA, \\\\
             a_1^{(g_F)}=-\frac{1}{2}n(n+1)A
                     \end{array}
  \right.
\end{equation}
and
\begin{equation}\label{ee2.19}
  \left\{\begin{array}{l}
             d>1,\\\\
              \varphi(x)=x+\lambda x^2,\\\\
           a_{1}^{(g_{\phi})}=-\frac{1}{2}d(d+1)\lambda, \\\\
             a_1^{(g_F)}=-\frac{1}{2}n(n+1)\lambda,
                     \end{array}
  \right.
\end{equation}
where $n=d+d_0$.

For the case of $d=1$, substituting \eqref{ee2.18} into \eqref{e5.55} and \eqref{e5.57}, we have
\begin{equation}\label{ee2.23}
  |R_{g_F}|^2-4|\mathrm{Ric}_{g_F}|^2=-2n(n+1)(2n+1)A^2+\frac{ |R_{g_{\phi}}|^2-4|\mathrm{Ric}_{g_{\phi}}|^2+12(d_0\lambda-nA)^2}{(1+\lambda x)^2},
\end{equation}
thus
\begin{equation}\label{ee2.24}
  \left\{\begin{array}{l}
           |R_{g_{\phi}}|^2-4|\mathrm{Ric}_{g_{\phi}}|^2=-12(d_0\lambda-nA)^2, \\\\
            |R_{g_F}|^2-4|\mathrm{Ric}_{g_F}|^2=-2n(n+1)(2n+1)A^2.
         \end{array}
  \right.
\end{equation}
So
\begin{equation}\label{ee2.25}
  \left\{\begin{array}{l}
           a_{2}^{(g_{\phi})}=0, \\\\
            a_2^{(g_F)}=\frac{1}{24}(n-1)n(n+1)(3n+2)A^2.
         \end{array}
  \right.
\end{equation}

For the case of $d>1$, by \eqref{ee2.19},  \eqref{e5.55} and \eqref{e5.57}, we get
\begin{equation}\label{ee2.20}
  |R_{g_F}|^2-4|\mathrm{Ric}_{g_F}|^2=-2n(n+1)(2n+1)\lambda^2+\frac{ |R_{g_{\phi}}|^2-4|\mathrm{Ric}_{g_{\phi}}|^2+2d(d+1)(2d+1)\lambda^2}{(1+\lambda x)^2},
\end{equation}
then
\begin{equation}\label{ee2.21}
  \left\{\begin{array}{l}
           |R_{g_{\phi}}|^2-4|\mathrm{Ric}_{g_{\phi}}|^2=-2d(d+1)(2d+1)\lambda^2, \\\\
            |R_{g_F}|^2-4|\mathrm{Ric}_{g_F}|^2=-2n(n+1)(2n+1)\lambda^2.
         \end{array}
  \right.
\end{equation}
Therefore
\begin{equation}\label{ee2.22}
  \left\{\begin{array}{l}
           a_{2}^{(g_{\phi})}=\frac{1}{24}(d-1)d(d+1)(3d+2)\lambda^2, \\\\
            a_2^{(g_F)}=\frac{1}{24}(n-1)n(n+1)(3n+2)\lambda^2.
         \end{array}
  \right.
\end{equation}

Using (3.9) of \cite{Feng2017}, it follows that
\begin{equation}\label{ee2.26}
  F(t)=\left\{\begin{array}{l}
                -\frac{1}{A}\log|1+ce^{t}|,\;A\neq 0,\\\\
                ce^{t},\;A=0, c>0;\\
              \end{array}
  \right.
\end{equation}

For the case of $$ M=\left\{(z,w)\in \Omega\times\mathbb{C}^{d_0}: \|w\|^2<e^{-\lambda\phi(z)}\right\}.$$
 The completeness of the fibre metric of $g_F$ requires
 \begin{equation*}
  \int_{-2\log 2}^{0}\sqrt{F''(t)}dt=+\infty.
\end{equation*}
Using \eqref{ee2.26}, we have
\begin{equation*}
  F(t)= -\frac{1}{A}\log\left(1-e^{t}\right).
\end{equation*}
From $g_F$ is a K\"{a}hler metric on $M$, it follows that $1+\lambda F'(t)>0$ and $F''(t)>0$ for $-\infty<t<0$. So by $\lambda\neq 0$, we get $A>0$ and $ \lambda>0$.

 For the case of $ M= \Omega\times\mathbb{C}^{d_0}$. If $A=0$, then $d=1$ and $F(t)=ce^t$, it is easy to see that the fibre metric of $g_F$ is complete at this time.
If $A\neq 0$, since $1+c e^{t}\neq 0$ for $-\infty\leq t<+\infty$, it follows that $c>0$. By \eqref{ee2.26}, we have
\begin{equation*}
  F(t)=- \frac{1}{A}\log\left(1+c e^{t}\right),c>0.
\end{equation*}
As $\lambda\neq 0$, $1+\lambda F'(t)>0$ and $F''(t)>0$ for $-\infty<t<+\infty$, it follows that $A<0$ and $ A\leq \lambda$.

\end{proof}

The following we give sufficient and  necessary conditions for some trivial Hermitian holomorphic vector bundles and their ball subbundles to be regular quantizations.

\begin{Theorem} \label{Th:1.2}{
Let $g_{\phi}$ be a K\"{a}hler  metric  on a domain $\Omega\subset \mathbb{C}^{d}$ associated with the K\"{a}hler form $\omega_{\phi}=\frac{\sqrt{-1}}{2\pi}\partial\overline{\partial}\phi$, where $\phi$ is a globally defined real analytic K\"{a}hler potential on $\Omega$. For $0\neq \lambda\in\mathbb{R}$, define
$$ M:=\left\{(z,w)\in \Omega\times\mathbb{C}^{d_0}: \|w\|^2<e^{-\lambda\phi(z)}\right\}\;\;\text{or}\;\;\Omega\times\mathbb{C}^{d_0}.$$
Set $g_F$ is a K\"{a}hler  metric  on the domain $M$ associated with the K\"{a}hler form $\omega_F=\frac{\sqrt{-1}}{2\pi}\partial\overline{\partial}\Phi_F$, here
$$\Phi_F(z,w):=\phi(z)+F(e^{\lambda\phi(z)}\|w\|^2),\;F(0)=0.$$

 Suppose that there exists a set $E\subset [0,+\infty)$ such that
 $$\mathcal{H}^2_{\alpha}(\Omega):=\left\{ f\in \mbox{\rm Hol}\big({\Omega}\big):  \int_{{\Omega}}|f|^2
 e^{-\alpha \phi} \frac{\omega^{d}_{\phi}}{d!}<+\infty\right\}\neq \{0\}$$
  for $\alpha\in E$, where $\mbox{\rm Hol}(\Omega)$ denotes the space of holomorphic functions on $\Omega$, and $E$ satisfies
\begin{equation*}
 E= E+\lambda\mathbb{N}:=\{\alpha+\lambda k:\alpha\in E,k\in \mathbb{N}\}
\end{equation*}
for $\lambda>0$.

Let $K_{\alpha}(z,\bar z)$ be the reproducing kernels of $\mathcal{H}^2_{\alpha}(\Omega)$, the Bergman functions $\epsilon_{\alpha g_{\phi}}$ for $(\Omega,\alpha g_{\phi})$ defined by $\epsilon_{\alpha g_{\phi}}=e^{-\alpha\phi(z)}K_{\alpha}(z,\bar z)$. In the same way, define the Bergman functions
$\epsilon_{\alpha g_F}$ for $(M,\alpha g_{F})$.

$(\mathrm{I})$ Given a number $\alpha_0\in E$, assume that the fibre metric of $g_F$ is complete, then Bergman functions $\epsilon_{\alpha g_F}$ are constants  on the domain
$$ M=\left\{(z,w)\in \Omega\times\mathbb{C}^{d_0}: \|w\|^2<e^{-\lambda\phi(z)}\right\}$$
 for all $\alpha\in E\cap [\alpha_0,+\infty)$ if and only if

$(\mathrm{i})$ For the case of $d=1$,
\begin{equation}\label{e2.1}
  \left\{\begin{array}{l}
    F(\rho)   =-\frac{1}{A}\log(1-\rho),\;A>0, \\\\
     \epsilon_{\alpha g_{\phi}}   =\alpha+d_0\lambda-nA, \lambda>0, \\\\
       \epsilon_{\alpha g_F} = \prod_{j=1}^{n}(\alpha-jA), \\\\
     \alpha\in (nA,+\infty)\cap [\alpha_0,+\infty)\cap E,
   \end{array}\right.
\end{equation}
where $n=d+d_0$.

$(\mathrm{ii})$ For the case of $d>1$,
\begin{equation}\label{e2.2}
  \left\{\begin{array}{l}
    F(\rho)   =-\frac{1}{\lambda}\log(1-\rho),\lambda>0, \\\\
    \epsilon_{\alpha g_{\phi}} = \prod_{j=1}^{d}(\alpha-j\lambda ), \\\\
       \epsilon_{\alpha g_F} = \prod_{j=1}^{n}(\alpha-j\lambda ), \\\\
    \alpha\in (nA,+\infty)\cap [\alpha_0,+\infty)\cap E.
   \end{array}\right.
\end{equation}

$(\mathrm{II})$ Let $\lambda=1$ and $\alpha_0\in E$, suppose that the fibre metric of $g_F$ is complete, then Bergman functions $\epsilon_{\alpha g_F}$ are constants  on the domain $M=\Omega\times\mathbb{C}^{d_0}$
for all $\alpha\in E\cap [\alpha_0,+\infty)$ if and only if
\begin{equation}\label{e2.3}
  \left\{\begin{array}{l}
            d=1,\\\\
            F(\rho)=c\rho,c>0, \\\\
    \epsilon_{\alpha g_{\phi}} =\alpha+d_0, \\\\
       \epsilon_{\alpha g_F} = \alpha^n, \\\\
     \alpha\in   (0,+\infty)\cap [\alpha_0,+\infty)\cap E.
   \end{array}\right.
\end{equation}

$(\mathrm{III})$ For $d>1$, $\lambda=-1$ and $E=\mathbb{N}$, let $\alpha_0\in \mathbb{N}$, then Bergman functions $\epsilon_{\alpha g_F}$ are constants on the domain $M=\Omega\times\mathbb{C}^{d_0}$
for all $\alpha\in \mathbb{N}\cap [\alpha_0,+\infty)$ if and only if
\begin{equation}\label{e2.4}
  \left\{\begin{array}{l}
    F(\rho)   =\log(1+c\rho), c>0,\\\\
    \epsilon_{\alpha g_{\phi}} = \prod_{j=1}^{d}(\alpha+j), \\\\
       \epsilon_{\alpha g_F} = \prod_{j=1}^{n}(\alpha+j), \\\\
     \alpha\in \mathbb{N}\cap [\alpha_0,+\infty).
   \end{array}\right.
\end{equation}

 }\end{Theorem}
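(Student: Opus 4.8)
The plan is to combine the curvature criterion of Theorem \ref{apth:3.3} with an explicit Forelli--Rudin-type decomposition of the weighted Bergman space $\mathcal{H}^2_\alpha(M)$ along the fibres of the projection $M\to\Omega$. I would first settle the functional form of $F$. Assume $\epsilon_{\alpha g_F}$ is constant on $M$ for every $\alpha$ in the unbounded set $E\cap[\alpha_0,+\infty)$, which by $E=E+\lambda\mathbb{N}$ contains the arithmetic progression $\alpha_0+\lambda\mathbb{N}$. Since $\phi$ is real analytic and the fibre metric of $g_F$ is complete, $\epsilon_{\alpha g_F}$ admits the asymptotic expansion \eqref{e1.3}--\eqref{e1.4}; by uniqueness of asymptotic expansions, spatial constancy of $\epsilon_{\alpha g_F}$ along this progression forces every coefficient $a_j^{(g_F)}$, in particular $a_1^{(g_F)}$ and $a_2^{(g_F)}$, to be constant on $M$. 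Feeding this into Theorem \ref{apth:3.3}, after the harmless reparametrization $\rho=e^{t}$ that turns the variable $t=\lambda\phi+\log\|w\|^2$ of Lemma \ref{Le:3.1} into $\rho=e^{\lambda\phi}\|w\|^2$, pins $F$ down to the logarithmic (resp.\ linear) profiles appearing in \eqref{e2.1}--\eqref{e2.4} and simultaneously records the values of $a_1^{(g_\phi)},a_2^{(g_\phi)}$.

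The analytic heart is a master formula for $\epsilon_{\alpha g_F}$. Expanding a holomorphic $f$ on $M$ as $f(z,w)=\sum_{\mathbf k}f_{\mathbf k}(z)w^{\mathbf k}$ and using that the weight $e^{-\alpha\Phi_F}\tfrac{\omega_F^{n}}{n!}$ is invariant under the fibrewise unitary action (it depends on $w$ only through $\rho$), the monomials $w^{\mathbf k}$ are mutually orthogonal. Computing the complex Hessian of $\Phi_F$ gives $\partial_{z_i}\Phi_F=\phi_i(1+\lambda\rho F')$ and a rank-one-perturbed fibre block, so that $\tfrac{\omega_F^{n}}{n!}=e^{\lambda d_0\phi}\,V(\rho)\,\tfrac{\omega_\phi^{d}}{d!}\wedge dV_{\mathrm{Leb}}(w)$ for an explicit $\rho$-profile $V$. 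Integrating out the fibre turns the $w^{\mathbf k}$-component into the norm on $\mathcal{H}^2_{\alpha+\lambda|\mathbf k|}(\Omega)$ up to a constant $J_{|\mathbf k|}(\alpha)=\int\rho^{|\mathbf k|+d_0-1}e^{-\alpha F(\rho)}V(\rho)\,d\rho$; this is exactly where $E=E+\lambda\mathbb{N}$ guarantees $\mathcal{H}^2_{\alpha+\lambda|\mathbf k|}(\Omega)\neq\{0\}$. Summing the reproducing kernels by total degree $k=|\mathbf k|$ and using $K_\beta(z,\bar z)=e^{\beta\phi}\epsilon_{\beta g_\phi}(z)$ yields
\begin{equation*}
\epsilon_{\alpha g_F}(z,w)=e^{-\alpha F(\rho)}\sum_{k\ge 0}\frac{b_k\,\rho^{k}}{J_k(\alpha)}\,\epsilon_{(\alpha+\lambda k)g_\phi}(z),
\end{equation*}
with combinatorial constants $b_k$.

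The theorem then falls out of this identity. Since $\{\rho^k\}$ are linearly independent and, for fixed $z$, $\rho$ sweeps the fibre, constancy of $\epsilon_{\alpha g_F}$ on $M$ is equivalent to each $\epsilon_{(\alpha+\lambda k)g_\phi}$ being a $z$-constant together with $\sum_k \tfrac{b_k}{J_k(\alpha)}\epsilon_{(\alpha+\lambda k)g_\phi}\rho^k=C\,e^{\alpha F(\rho)}$. For the logarithmic $F$ one has $e^{\alpha F(\rho)}=(1-\rho)^{-\alpha/A}$, whose binomial coefficients are the $[\rho^k]$ to be matched; comparing coefficients (the $k=0$ term giving $\epsilon_{\alpha g_\phi}$) solves for both $\epsilon_{(\alpha+\lambda k)g_\phi}$ and $C=\epsilon_{\alpha g_F}$, producing the announced products $\prod_j(\alpha-jA)$ and $\prod_j(\alpha-j\lambda)$. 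The cases $F=c\rho$ and $F=\log(1+c\rho)$ are handled identically via $e^{\alpha F}=e^{c\alpha\rho}$ and $(1+c\rho)^{\alpha}$, giving Gamma-type $J_k$ and the products $\alpha^n$, $\prod_j(\alpha+j)$. Conversely, substituting the prescribed $F$ and $\epsilon_{\alpha g_\phi}$ into the master formula and resumming verifies sufficiency directly.

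I expect the main obstacle to be the explicit evaluation of the Hessian determinant and of the fibre integrals $J_k(\alpha)$, together with tracking for which $\alpha$ these integrals converge and the products remain positive: it is precisely finiteness of $J_k(\alpha)$ near the fibre boundary $\rho=1$ and positivity of $\epsilon_{\alpha g_F}$ that should force the admissible ranges $\alpha>nA$ (resp.\ $\alpha>0$, $\alpha\in\mathbb{N}$) and the sign constraints $A>0,\lambda>0$ (resp.\ $\lambda<0,c>0$). As in the proof of Theorem \ref{apth:3.3}, the completeness versus incompleteness of the fibre metric must be used to select the correct antiderivative $F$, and hence the correct generating function driving the resummation in the three regimes.
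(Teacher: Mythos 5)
Your strategy coincides with the paper's actual proof in all essentials: constancy of $\epsilon_{\alpha g_F}$ for the unbounded family $\alpha\in E\cap[\alpha_0,+\infty)$ forces $a_1^{(g_F)}$ and $a_2^{(g_F)}$ to be constant via the expansion \eqref{e1.3}--\eqref{e1.4}, Theorem \ref{apth:3.3} then pins down $F$ (with completeness versus incompleteness of the fibre metric selecting the antiderivative, exactly as you say); fibrewise rotation invariance of the weight gives the orthogonal decomposition of the weighted Bergman space into the spaces $\mathcal{H}^2_{\alpha+\lambda|\mathbf{m}|}(\Omega)\otimes\{cw^{\mathbf{m}}\}$ (this is where $E=E+\lambda\mathbb{N}$ enters, as you note); the summed reproducing kernel yields precisely your master formula $\epsilon_{\alpha g_F}=e^{-\alpha F(\rho)}\sum_k \epsilon_{(\alpha+\lambda k)g_\phi}\,\rho^k/\psi(\alpha,k)$; and the admissible ranges $\alpha>nA$, $\alpha>0$, etc.\ do come from convergence of the fibre integrals $\psi(\alpha,k)$, just as you predict. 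So structurally there is nothing new or missing in the architecture.

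The one step that fails as written is the normalization. Matching Taylor coefficients in your identity $\sum_k \bigl(b_k/J_k(\alpha)\bigr)\epsilon_{(\alpha+\lambda k)g_\phi}\,\rho^k=C\,e^{\alpha F(\rho)}$ determines only the ratios $\epsilon_{(\alpha+\lambda k)g_\phi}/\epsilon_{\alpha g_\phi}$, because the $k=0$ term reads $b_0\,\epsilon_{\alpha g_\phi}/J_0(\alpha)=C$ and involves the second unknown $C=\epsilon_{\alpha g_F}$; contrary to your parenthetical claim, the $k=0$ term does not "give" $\epsilon_{\alpha g_\phi}$. Cross-comparing different base points $\alpha$, $\alpha+\lambda$, \dots does not close the circle either: the ratio relations satisfy the cocycle identity automatically and leave one free constant along each progression. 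The paper resolves this with an extra ingredient you never invoke at this point: the leading coefficient $a_0^{(g_\phi)}=1$ of the Bergman expansion on the base, used in the form $\lim_{k\to+\infty}\epsilon_{(\alpha+\lambda k)g_\phi}/(\lambda k)^d=1$ (respectively $\epsilon_{\alpha g_\phi}\sim\alpha^d$ in parts $(\mathrm{II})$ and $(\mathrm{III})$), which converts the explicit ratios into the absolute values $\epsilon_{\alpha g_\phi}=\alpha+d_0\lambda-nA$, $\prod_{j=1}^d(\alpha-j\lambda)$, $\alpha+d_0$, $\prod_{j=1}^d(\alpha+j)$; only then do the closed forms of $\epsilon_{\alpha g_F}$ follow from the $k=0$ relation and the explicit evaluation of $\psi(\alpha,0)$. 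You already have all the ingredients (you cite the expansion with $a_0=1$ earlier), so the repair is a one-line addition, but as stated your argument leaves $\epsilon_{\alpha g_\phi}$ and $\epsilon_{\alpha g_F}$ underdetermined by a common factor.
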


\begin{proof}[Proof]
The following we only prove necessary conditions,  sufficiency conditions are obvious.

As $\epsilon_{\alpha g_F}$ are constants on $M$ for all $\alpha\in E\cap[\alpha_0,+\infty)$,  thus
\begin{equation*}
  \epsilon_{\alpha g_F}\sim \alpha^n+ a_1^{(g_F)}\alpha^{n-1}+ a_2^{(g_F)}\alpha^{n-2}+\cdots,\;\alpha\rightarrow+\infty,
\end{equation*}
where $ a_1^{(g_F)}$ and $ a_2^{(g_F)}$ are constants, and
\begin{equation*}
   a_1^{(g_F)}=\frac{1}{2}k_{g_F},\; a_2^{(g_F)}=\frac{1}{3}\triangle_{g_F} k_{g_F}+\frac{1}{24}|R_{g_F}|^2-\frac{1}{6}|\mathrm{Ric}_{g_F}|^2+\frac{1}{8}k_{g_F}^2.
\end{equation*}

By Theorem \ref{apth:3.3}, we have
\begin{equation*}
  F(\rho)=\left\{\begin{array}{c}
                -\frac{1}{A}\log\left(1-\rho\right),d=1, \\\\
                -\frac{1}{\lambda}\log\left(1-\rho\right),d>1
              \end{array}
  \right.
\end{equation*}
for $ M=\left\{(z,w)\in \Omega\times\mathbb{C}^{d_0}: \|w\|^2<e^{-\lambda\phi(z)}\right\}$, and
\begin{equation*}
  F(\rho)=\left\{\begin{array}{l}
                c\rho,\;d=1,c>0, \\\\
               \log\left(1+c\rho\right),\;d>1,c>0
              \end{array}
  \right.
\end{equation*}
for $ M= \Omega\times\mathbb{C}^{d_0}$.

 Put
$$\mathbf{H}_{\alpha}^2(M):=\left\{ f\in \mbox{\rm Hol}\big( M\big): \frac{1}{\pi^n}\int_{ M}|f(Z)|^2
 e^{-\alpha \Phi_{F}(Z)} \det(\frac{\partial^2\Phi_{F}}{\partial Z^t\partial \overline{Z}})(Z)dm(Z)<+\infty\right\},$$
where $n=d+d_0$, $Z=(z,w)$, and $dm(Z)$ denotes the Lebesgue measure.

Let $f\in \mathbf{H}_{\alpha}^2(M)$, then
\begin{equation}\label{e6.4}
 f(z,w)=\sum_{\mathbf{m}}f_{\mathbf{m}}(z)w^{\mathbf{m}},
\end{equation}
where $\mathbf{m}=(m_1,m_2,\ldots,m_{d_0})$, $m_i\in \mathbb{N}$, $1\leq i\leq d_0$, $w^{\mathbf{m}}=\prod_{i=1}^{d_0}w_i^{m_i}$.

From Lemma 2.1 of \cite{Feng}, we get
\begin{equation*}
  e^{-\alpha \Phi_{F}(Z)} \det(\frac{\partial^2\Phi_{F}}{\partial Z^t\partial \overline{Z}})(Z)
  =e^{-(\alpha-\lambda d_0) \phi(z)}\det(\frac{\partial^2\phi}{\partial z^t\partial \bar z})(z) H(\alpha,\rho),
\end{equation*}
here
\begin{equation*}
   H(\alpha,\rho)=e^{-\alpha F(\rho)}(F'(\rho))^{d_0-1}(F'(\rho)+\rho F''(\rho))(1+\lambda\rho F'(\rho))^{d}.
\end{equation*}

Using the measures $ e^{-\alpha \Phi_{F}(Z)} \det(\frac{\partial^2\Phi_{F}}{\partial Z^t\partial \overline{Z}})(Z)dm(Z)$ are invariant under transformations
 $$(z,w_1,w_2,\ldots,w_{d_0})\mapsto \left(z,e^{\sqrt{-1}\theta_{1}}w_1,e^{\sqrt{-1}\theta_{2}}w_2,\ldots,e^{\sqrt{-1}\theta_{d_0}}w_{d_0}\right),\;\forall\theta_i\in\mathbb{R},1\leq i\leq d_0,$$
we have
\begin{eqnarray}
\nonumber   & & \frac{1}{\pi^n}\int_{ M}|f(Z)|^2 e^{-\alpha \Phi_{F}(Z)} \det(\frac{\partial^2\Phi_{F}}{\partial Z^t\partial \overline{Z}})(Z)dm(Z) \\
\nonumber   &=& \frac{1}{\pi^n}\sum_{\mathbf{m}}\int_{ M}|f_{\mathbf{m}}(z)|^2w^{\mathbf{m}}\overline{w}^{\mathbf{m}} e^{-\alpha \Phi_{F}(Z)} \det(\frac{\partial^2\Phi_{F}}{\partial Z^t\partial \overline{Z}})(Z)dm(Z) \\
\label{e6.6.1}    &=& \sum_{\mathbf{m}}\frac{1}{\pi^{d}}\int_{\Omega}|f_{\mathbf{m}}(z)|^2e^{-(\alpha+\lambda|\mathbf{m}|)\phi(z)}\times \det(\frac{\partial^2\phi}{\partial z^t\partial \bar z})(z)dm(z)\times I_{\mathbf{m}},
\end{eqnarray}
here $|\mathbf{m}|=\sum_{i=1}^{d_0}m_i$ and
\begin{equation}\label{e6.7.1}
 I_{\mathbf{m}}=\frac{1}{\pi^{d_0}} \int_{\mathbb{B}^{d_0}}w^{\mathbf{m}}\overline{w}^{\mathbf{m}}H(\alpha,\|{w}\|^{2}) dm(w)\;\text{or}\;\;\frac{1}{\pi^{d_0}} \int_{\mathbb{C}^{d_0}}w^{\mathbf{m}}\overline{w}^{\mathbf{m}}H(\alpha,\|{w}\|^{2}) dm(w).
\end{equation}

Let $d\sigma$ be the Euclidean invariant measure on the  sphere $\mathbb{S}^{2d_0-1}$, using integral formulas
\begin{equation}\label{e6.8.1}
 \int_{\mathbb{S}^{2d_0-1}}w^{\mathbf{m}}\overline{w}^{\mathbf{m}}d\sigma(w)=\frac{2\pi^{d_0}\prod_{i=1}^{d_0}\Gamma(1+m_i)}{\Gamma(|\mathbf{m}|+d_0)},
\end{equation}
we obtain
\begin{equation}\label{e6.9.1}
  I_{\mathbf{m}}=\frac{\prod_{i=1}^{d_0}\Gamma(1+m_i)}{\Gamma(|\mathbf{m}|+1)}\psi(\alpha,|\mathbf{m}|),
\end{equation}
where
\begin{equation}\label{e6.10.1}
  \psi(\alpha,k)=\frac{\Gamma(k+1)}{\Gamma(k+d_0)} \int_0^1u^{k+d_0-1}H(\alpha,u)du\;\;\text{or}\;\;\frac{\Gamma(k+1)}{\Gamma(k+d_0)} \int_0^{+\infty}u^{k+d_0-1}H(\alpha,u)du.
\end{equation}

According to \eqref{e6.6.1} and \eqref{e6.9.1}, we get the orthogonal direct sum decompositions
\begin{equation*}
 \mathbf{H}_{\alpha}^2(M)=\bigoplus_{I_{\mathbf{m}}<+\infty,\atop \alpha+\lambda|\mathbf{m}|\in E} \mathcal{H}^2_{\alpha+\lambda|\mathbf{m}|}(\Omega)\otimes \{cw^{\mathbf{m}}:c\in \mathbb{C}\}
\end{equation*}
for $\alpha\in E$. Then the reproducing kernels of  $\mathbf{H}_{\alpha}^2(M)$ can be expressed as
\begin{eqnarray}
\nonumber    \mathbf{K}_{\alpha}(Z,\overline{Z})    &= & \sum_{I_{\mathbf{m}}<+\infty,\atop \alpha+\lambda|\mathbf{m}|\in E} K_{\alpha+\lambda|\mathbf{m}|}(z,\bar{z})\frac{1}{\psi(\alpha,|\mathbf{m}|)}\frac{\Gamma(|\mathbf{m}|+1)}{\prod_{i=1}^{d_0}\Gamma(1+m_i)}w^{\mathbf{m}}\overline{w}^{\mathbf{m}} \\
 \label{e6.11.1}  &=&\sum_{k\in \mathbb{N},\atop\alpha+\lambda k\in E}K_{\alpha+\lambda k}(z,\bar{z},)\frac{1}{\psi(\alpha,k)}\|w\|^{2k},
\end{eqnarray}
where $K_{\alpha}(z,\bar{z})$ are the reproducing kernels of $\mathcal{H}^2_{\alpha}(\Omega)$.

Applying $K_{\alpha}(z,\bar{z})=e^{\alpha\phi(z)} \epsilon_{\alpha g_{\phi}}(z) $,
$\mathbf{K}_{\alpha}(z,w,\overline{z},\overline{w})=e^{\alpha\Phi_F(z,w)}\epsilon_{\alpha g_F}(z,w)$ and
\eqref{e6.11.1}, we have
\begin{equation}\label{e6.12.1}
\epsilon_{\alpha g_F}(z,w)=e^{-\alpha F(\rho)} \sum_{k\in \mathbb{N},\atop\alpha+\lambda k\in E}\frac{\epsilon_{(\alpha+\lambda k)g_{\phi}}(z)}{\psi(\alpha,k)}\rho^{k},
\end{equation}
where
\begin{equation*}
 \rho=e^{\lambda\phi(z)}\|w\|^2.
\end{equation*}

Talking $w=0$ in \eqref{e6.12.1}, using $F(0)=0$, we get
\begin{equation}\label{e6.13.1}
 \epsilon_{\alpha g_F}(z,0)=\frac{\epsilon_{\alpha g_{\phi}}(z)}{\psi(\alpha,0)},
\end{equation}
by $\epsilon_{\alpha g_F}(z,w)$ are constants on $M$, it follows that $\epsilon_{\alpha g_{\phi}}(z)$ are constants on $\Omega$.

Substituting \eqref{e6.13.1} into \eqref{e6.12.1}, we have
\begin{equation}\label{e6.14.1}
e^{\alpha F(\rho)} =\sum_{k\in \mathbb{N},\atop\alpha+\lambda k\in E}\frac{\epsilon_{(\alpha+\lambda k)g_{\phi}}}{\epsilon_{\alpha g_{\phi}}}\frac{\psi(\alpha,0)}{\psi(\alpha,k)}\rho^k,\;\alpha\in[\alpha_0,+\infty)\cap E.
\end{equation}

 $(\mathrm{I})$ For
 $$ M=\left\{(z,w)\in \Omega\times\mathbb{C}^{d_0}: \|w\|^2<e^{-\lambda\phi(z)}\right\},$$
using
\begin{equation*}
 F(\rho)=\left\{\begin{array}{l}
               -\frac{1}{A}\log\left(1-\rho\right),\;d=1, \\\\
               -\frac{1}{\lambda}\log\left(1-\rho\right),\;d>1,
             \end{array}
 \right.
\end{equation*}
then \eqref{e6.14.1} becomes
\begin{equation}\label{e6.14}
\sum_{k=0}^{+\infty}\frac{\epsilon_{(\alpha+\lambda k)g_{\phi}}}{\epsilon_{\alpha g_{\phi}}}\frac{\psi(\alpha,0)}{\psi(\alpha,k)}\rho^k=\left\{\begin{array}{l}
               \left(1-\rho\right)^{-\frac{\alpha}{A}},\;d=1, \\\\
               \left(1-\rho\right)^{-\frac{\alpha}{\lambda}},\;d>1,
             \end{array}
 \right.
\end{equation}
where
\begin{equation}\label{e6.10}
  \psi(\alpha,k)=\left\{\begin{array}{l}
                          \frac{\Gamma(k+1)\Gamma(\frac{\alpha}{A}-n)\times(\alpha+\lambda k+d_0\lambda-nA)}{A^{n}\Gamma(\frac{\alpha}{A}+k)},
  \;\alpha\in (nA,+\infty)\cap E,d=1, \\\\
                          \frac{\Gamma(k+1)\Gamma(\frac{\alpha}{\lambda}-n)}{\lambda^{d_0}\Gamma(\frac{\alpha}{\lambda}+k-d)},\;\alpha\in (n\lambda,+\infty)\cap E,d>1.
                        \end{array}
  \right.
\end{equation}

Using \eqref{e6.14} and
\begin{equation*}
  (1-u)^{-\alpha}=\sum_{k=0}^{+\infty}\frac{\Gamma(\alpha+k)}{\Gamma(\alpha)\Gamma(k+1)}u^k,\;|u|<1,
\end{equation*}
 we get
\begin{equation*}
 \frac{\epsilon_{(\alpha+\lambda k)g_{\phi}}}{\epsilon_{\alpha g_{\phi}}}\frac{\psi(\alpha,0)}{\psi(\alpha,k)}=\left\{\begin{array}{c}
                                                                                                                  \frac{\Gamma(\frac{\alpha}{A}+k)}{\Gamma(\frac{\alpha}{A})\Gamma(k+1)},d=1,  \\\\
                                                                                                                    \frac{\Gamma(\frac{\alpha}{\lambda}+k)}{\Gamma(\frac{\alpha}{\lambda})\Gamma(k+1)},d>1.
                                                                                                                 \end{array}
 \right.
\end{equation*}
 Which combines with \eqref{e6.10}, we obtain
\begin{equation}\label{e6.16}
  \frac{\epsilon_{(\alpha+\lambda k)g_{\phi}}}{\epsilon_{\alpha g_{\phi}}}
 =\left\{\begin{array}{l}
           \frac{\alpha+\lambda k+d_0\lambda-nA}{\alpha+d_0\lambda-nA},d=1, \\\\
           \frac{\Gamma(\frac{\alpha}{\lambda}+k)}{\Gamma(\frac{\alpha}{\lambda}+k-d)}\frac{\Gamma(\frac{\alpha}{\lambda}-d)}{\Gamma(\frac{\alpha}{\lambda})},d>1.
         \end{array}
 \right.
\end{equation}

Combining \eqref{e6.16} and
$$\lim_{k\rightarrow+\infty} \frac{\epsilon_{(\alpha+\lambda k)g_{\phi}}}{(\lambda k)^{d}\epsilon_{\alpha g_{\phi}}}
= \frac{1}{\epsilon_{\alpha g_{\phi}}},$$
it follows that
\begin{equation}\label{e6.17}
\epsilon_{\alpha g_{\phi}}=\left\{\begin{array}{l}
                                     \alpha+d_0\lambda-nA,d=1,  \\\\
                                     \frac{\lambda^d\Gamma(\frac{\alpha}{\lambda})}{\Gamma(\frac{\alpha}{\lambda}-d)},d>1.
                                    \end{array}
\right.
\end{equation}

Using \eqref{e6.12.1}, \eqref{e6.10} and \eqref{e6.17}, we have
\begin{equation*}
  \epsilon_{\alpha g_F}=\left\{\begin{array}{l}
                                  \frac{ A^n\Gamma(\frac{\alpha}{A})}{\Gamma(\frac{\alpha}{A}-n)},\;d=1,  \\\\
                                   \frac{\lambda^n\Gamma(\frac{\alpha}{\lambda})}{\Gamma(\frac{\alpha}{\lambda}-n)},\;d>1
                                 \end{array}
  \right.
\end{equation*}
for $\alpha\in (nA,+\infty)\cap[\alpha_0,+\infty)\cap E$.

$(\mathrm{II})$ For $M= \Omega\times\mathbb{C}^{d_0}$, $d=1$, $c>0$ and $\lambda=1$, substituting
\begin{equation*}
 F(\rho)=c\rho
\end{equation*}
 into  \eqref{e6.14.1}, we obtain
\begin{equation}\label{e6.14.2}
\sum_{k=0}^{+\infty}\frac{\epsilon_{(\alpha+\lambda k)g_{\phi}}}{\epsilon_{\alpha g_{\phi}}}\frac{\psi(\alpha,0)}{\psi(\alpha,k)}\rho^k
=e^{c\alpha \rho},
\end{equation}
where
\begin{equation}\label{e6.10.1}
  \psi(\alpha,k)=\frac{\Gamma(k+1)\times(\alpha+\lambda k+\lambda d_0)}{c^k\alpha^{k+d_0+1}},
  \;\alpha\in  E\cap (0,+\infty).
\end{equation}

From \eqref{e6.14.2} and \eqref{e6.10.1}, we infer that
\begin{equation}\label{e6.16.1}
  \frac{\epsilon_{(\alpha+\lambda k)g_{\phi}}}{\epsilon_{\alpha g_{\phi}}}
 = \frac{\alpha+\lambda k+\lambda d_0}{\alpha+\lambda d_0},\alpha\in  E\cap (0,+\infty)\cap[\alpha_0,+\infty),
\end{equation}

By
\begin{equation*}
  \epsilon_{\alpha g_{\phi}}\sim \alpha,\alpha\rightarrow+\infty,
\end{equation*}
we get
\begin{equation*}
 \epsilon_{\alpha g_{\phi}}=\alpha+\lambda d_0=\alpha+d_0,
\end{equation*}
which combines with \eqref{e6.12.1} and \eqref{e6.10.1}, it follows that
\begin{equation*}
   \epsilon_{\alpha g_F}=\alpha^{1+d_0},\alpha\in  E\cap (0,+\infty)\cap[\alpha_0,+\infty).
\end{equation*}

$(\mathrm{III})$ For $M= \Omega\times\mathbb{C}^{d_0}$ and $\lambda=-1$, using
\begin{equation*}
 F(\rho)=\log(1+c\rho), \;c>0,
\end{equation*}
we infer from \eqref{e6.14.1} that
\begin{equation}\label{e6.14.3}
\sum_{k=0}^{\alpha}\frac{\epsilon_{(\alpha-k)g_{\phi}}}{\epsilon_{\alpha g_{\phi}}}\frac{\psi(\alpha,0)}{\psi(\alpha,k)}\rho^k
=(1+c\rho)^{\alpha},\;\alpha\in E\cap[\alpha_0,+\infty),
\end{equation}
where
\begin{equation}\label{e6.10.2}
  \psi(\alpha,k)=\frac{\Gamma(k+1)\Gamma(\alpha-k+d+1)}{c^k\Gamma(\alpha+n+1)}, \;\alpha\in  E, 0\leq k\leq \alpha.
\end{equation}

Using \eqref{e6.14.3} and \eqref{e6.10.2}, we have
\begin{equation*}
  \frac{\epsilon_{(\alpha-k)g_{\phi}}}{\epsilon_{\alpha g_{\phi}}}
 = \frac{\prod_{j=1}^d(\alpha-k+j)}{\prod_{j=1}^d(\alpha+j)},\;\alpha\in E\cap[\alpha_0,+\infty),0\leq k\leq \alpha.
\end{equation*}
Namely
\begin{equation}\label{e6.16.2}
  \frac{\epsilon_{kg_{\phi}}}{\epsilon_{\alpha g_{\phi}}}
 = \frac{\prod_{j=1}^d(k+j)}{\prod_{j=1}^d(\alpha+j)},\;\alpha\in E\cap[\alpha_0,+\infty),0\leq k\leq \alpha.
\end{equation}

According to \eqref{e6.16.2} and
$$\lim_{k\rightarrow+\infty} \frac{\epsilon_{\alpha g_{\phi}}}{\alpha^{d}}= 1,$$
we have
\begin{equation*}
 \epsilon_{k g_{\phi}}=\prod_{j=1}^d(k+j).
\end{equation*}
So from \eqref{e6.12.1} and \eqref{e6.10.2},   we obtain
\begin{equation*}
   \epsilon_{\alpha g_F}=\prod_{j=1}^n(\alpha+j),\alpha\in  E\cap[\alpha_0,+\infty).
\end{equation*}

\end{proof}

\setcounter{equation}{0}
\section{Proof of Theorem \ref{Th:1.3}}

In order to prove Theorem \ref{Th:1.3}, we need the following lemma.

\begin{Lemma} \label{Le:2.2}{
Let $(L, h)$ be a Hermitian holomorphic line bundle over a K\"{a}hler manifold $(M,\omega)$ of complex dimension $n$.  Let $f$ be a holomorphic section on $M- Y$ such that
\begin{equation*}
  \int_{M-Y}h(f(z),f(z))\frac{\omega^n}{n!}<+\infty,
\end{equation*}
where $Y$ is an analytic set of $M$. Then $f$ extends to a (unique) global holomorphic section, namely there exists $s\in H^0(M,L)$ such that $s(z)=f(z)$ for all $z\in M- Y$.

 }\end{Lemma}

 Lemma \ref{Le:2.2} can be proved by using Skoda's lemma (see e.g. Lemma 2.3.22 of \cite{MM07}) and its application (\cite{MM07}, (ii) in Lemma 6.2.1), for details refer to  Lemma 4.1 of \cite{Loi-Mossa-Zuddas}.

\begin{proof}[\textbf{Proof of Theorem} \ref{Th:1.3}]
The part $(\mathrm{II})$  can be proved in the same way as  the part $(\mathrm{I})$, so only the proof of the part $(\mathrm{I})$ is given.

  Suppose  $U$ is biholomorphically equivalent to a domain $\Omega\subset \mathbb{C}^d$, and
\begin{equation*}
  \theta_0:L_0\supset \pi_0^{-1}(U)\rightarrow U\times \mathbb{C}
\end{equation*}
is an analytic homeomorphism such that for every $z\in U$ the map
\begin{equation*}
 \theta_0: \pi_0^{-1}(z) \rightarrow  \{z\}\times \mathbb{C}\rightarrow \mathbb{C}
\end{equation*}
is a linear isomorphism.

Let $\tau(z):=\theta_0^{-1}(z,1)\;(z\in U)$ be a trivializing holomorphic section,  then
$$\pi_0^{-1}(U)=\{\mathbf{v}=u\tau(z)\in L_0:z\in U,u\in \mathbb{C}\},$$
and there exists a globally defined real function  $\phi$ on $U$ such that
\begin{equation*}
  h_0(\mathbf{v},\mathbf{v})=e^{-\phi(z)}|u|^2,\;\mathbf{v}=u\tau(z)\in \pi^{-1}(U).
\end{equation*}

For the vect bundle $L_0^{*\oplus r}$ over $M_0$, since
\begin{equation*}
  \theta_p:\left\{\begin{array}{rcl}
             L_0^{*\oplus r}\supset p^{-1}(U) & \longrightarrow & U\times\mathbb{C}^r, \\\\
             (z,w\tau^{*}(z)) & \longmapsto & (z,w)
           \end{array}\right.
\end{equation*}
is an analytic homeomorphism, it follows that
\begin{equation*}
  \theta_{p\circ\Pi}:\left\{\begin{array}{rclll}
             L\supset (p\circ\Pi)^{-1}(U) & \longrightarrow & p^{-1}(U)\times\mathbb{C}& \longrightarrow & U\times\mathbb{C}^{r}\times\mathbb{C}, \\\\
             (z,w\tau^{*}(z),v\tau(z)) & \longmapsto & (z,w\tau^{*}(z),v)& \longmapsto & (z,w,v)
           \end{array}\right.
\end{equation*}
 is also an analytic homeomorphism, where $\tau^{*}(z)$ are the duals of $\tau(z)$. So
\begin{equation*}
  h(\mathbf{x},\mathbf{x})=e^{\phi(z)}\|w\|^2,\;\mathbf{x}=(z,w\tau^{*}(z))\in p^{-1}(U),
\end{equation*}
\begin{equation*}
M\cap p^{-1}(U)= B(L_0^{*\oplus r})\cap p^{-1}(U)=\left\{(z,w\tau^{*}(z)):z\in U,w\in\mathbb{C}^r, e^{\phi(z)}\|w\|^2<1\right\},
\end{equation*}
\begin{equation*}
  h_F(\mathbf{y},\mathbf{y})=e^{-F(e^{\phi(z)}\|w\|^2)}e^{-\phi(z)}|v|^2,\;\mathbf{y}=(z,w\tau^{*}(z),v\tau(z))\in (p\circ\Pi)^{-1}(U)
\end{equation*}
and
\begin{equation*}
  \omega_F|_{M\cap p^{-1}(U)}=\frac{\sqrt{-1}}{2\pi}\partial\overline{\partial}\Phi_F,\;\Phi_F(z,w\tau^{*}(z))=\phi(z)+F(e^{\phi(z)}\|w\|^2).
\end{equation*}

For any $s\in H^{0}(M,L^m)$, then
\begin{equation*}
  s|_{M\cap p^{-1}(U)}(\mathbf{x})=f(z,w)\tau^m(z),\;\mathbf{x}=(z,w\tau^{*}(z))\in p^{-1}(U),
\end{equation*}
where $f$ is a holomorphic function on a domain
\begin{equation*}
  \widetilde{U}=\left\{(z,w)\in U\times\mathbb{C}^r:e^{\phi(z)}\|w\|^2<1\right\}.
\end{equation*}

Let
\begin{equation*}
  \mathcal{H}_m^2M)=\left\{s\in H^{0}(M,L^m):\int_{M}h_F^m(s,s)\frac{\omega_F^{d+r}}{(d+r)!}<+\infty\right\}
\end{equation*}
and
\begin{equation*}
\mathcal{H}_{m}^2(\widetilde{U})=\left\{ f\in \mbox{\rm Hol}\big({\widetilde{U}}\big): \int_{\widetilde{U}}|f(Z)|^2
 e^{-m \Phi_F(Z)}\frac{\varpi_F^{d+r}(Z)}{(d+r)!}<+\infty\right\},
\end{equation*}
where $ \varpi_F=\frac{\sqrt{-1}}{2\pi}\partial\overline{\partial}\widetilde{\Phi}_F$, $\widetilde{\Phi}_F(Z)=\phi(z)+F(e^{\phi(z)}\|w\|^2)$ for $Z=(z,w)\in \widetilde{U}$, and $\mbox{\rm Hol}\big({\widetilde{U}}\big)$ stands for the space of holomorphic functions on $\widetilde{U}$.

Since $U$ is a dense open contractible subset of $M_0$, it follows that the correspondence
$$\widetilde{U}\ni (z, w) \longmapsto (z, w\tau^{*}(z))\in M $$
 sets up a bijection between the Hartogs domain $\widetilde{U}$ and a dense open subset of $M$ . By $p^{-1}(M_0- U)$ is an analytic set and Lemma \ref{Le:2.2}, there exists an isometric isomorphism between $\mathcal{H}_m^2(M)$ and $ \mathcal{H}_{m}^2(\widetilde{U})$, that is
\begin{equation*}
  s\in \mathcal{H}_m^2(M)\longmapsto f\in \mathcal{H}_{m}^2(\widetilde{U})
\end{equation*}
if $s|_{M\cap p^{-1}(U)}(\mathbf{x})=f(z,w)\tau^m(z)$, for $\mathbf{x}=(z,w\tau^{*}(z))\in p^{-1}(U)$. Thus
\begin{equation}\label{e2.4}
  \epsilon_{m g_F}(z,w\tau^{*}(z))=\epsilon_{m\widetilde{g}_F}(z,w),\;(z,w)\in\widetilde{U},
\end{equation}
here $\widetilde{g}_F$ is a K\"{a}hler  metric  on $\widetilde{U}$ associated to the K\"{a}hler form $\varpi_F=\frac{\sqrt{-1}}{2\pi}\partial\overline{\partial}\widetilde{\Phi}_F$. Similarly, we have
\begin{equation}\label{e2.5}
   \epsilon_{m g_0}(z)=\epsilon_{mg_{\phi}}(z),\;z\in U,
\end{equation}
where $g_{\phi}$ is a K\"{a}hler  metric  on $U$ associated to the K\"{a}hler form $\omega_{\phi}=\frac{\sqrt{-1}}{2\pi}\partial\overline{\partial}\phi$.

As $\epsilon_{m g_0}$ and $\epsilon_{m g_F}$ are continuous functions, by $U$ and $M\cap p^{-1}(U)$ are  dense open subsets of $M_0$ and  $M$, respectively, it follows that both $\epsilon_{mg_{\phi}}$ and $\epsilon_{m\widetilde{g}_F}$ are constants if and only if  both $\epsilon_{m g_0}$ and $\epsilon_{m g_F}$ are constants. So using Theorem \ref{Th:1.2}, \eqref{e2.4} and \eqref{e2.5},  we obtain \eqref{e1.12} and \eqref{e1.13}.

\end{proof}

\setcounter{equation}{0}
\section{Proof of Theorem \ref{Th:1.4} }

In this section, using Theorem \ref{apth:3.3}, Theorem \ref{Th:1.2}, the Hirzebruch-Riemann-Roch formula and Kobayashi-Ochiai's characterization of the projective spaces, we give a proof of  Theorem \ref{Th:1.4}.

\begin{proof}[\textbf{Proof of Theorem} \ref{Th:1.4}]

Let $n=d+r$. Denote $g_0$  as a K\"{a}hler  metric  associated with the K\"{a}hler form $\omega_0$.

Because $(M,\omega)$ admits the regular quantization, from \cite{Loi1}, we obtain
\begin{equation*}
  \epsilon_{mg}\sim\sum_{j=0}^{+\infty} a_j^{(g)}m^{n-j},m\rightarrow +\infty,
\end{equation*}
and all $a_j^{(g)}$ are constants on $M$. Thus all $a_j^{(g)}$ are constants on $E$, using Theorem \ref{apth:3.3}, it follows that $a_j^{(g_0)}(1\leq j\leq 2)$ are constants on $M_0$,
\begin{equation*}
  F(\rho)=\left\{\begin{array}{l}
                -\frac{1}{A}\log(1+c\rho), \lambda\geq A, d=1, \\\\
                -\frac{1}{\lambda}\log(1+c\rho),d>1
              \end{array}
  \right.
\end{equation*}
and
\begin{equation*}
 a_1^{(g_0)}=\left\{\begin{array}{l}
                d_0\lambda-n A, d=1, \\\\

                -\frac{1}{2}d(d+1)\lambda, d>1,
              \end{array}
  \right.
\end{equation*}
where $c>0$ and $\lambda=-1$. According to  Lemma 4.1 of \cite{Fu-Yau-Zhou} and the explicit expression of function $F$, $\omega$ can be extended across $M-E$.

For $d=1$, the Ricci form $\mathrm{Ric_{g_0}}>0$ on the basis of $a_1^{(g_0)}$ is a constant on $M_0$ and $$a_1^{(g_0)}=d_0\lambda-n A=-d_0-n A\geq -d_0+n=d=1,$$
 it follows from \cite{Kobayashi-1961} that $M_0$ is simply connected. Using the uniformization theorem, we infer that $M_0$ is biholomorphic to a complex projective space $\mathbb{CP}^1$.

 Due to
\begin{equation*}
  \mathrm{Ric_{g_0}}=2a_1^{(g_0)}c_1(L_0,h_0),
\end{equation*}
it follows that
\begin{equation*}
  2\leq 2a_1^{(g_0)}=\frac{\int_{M_0}\mathrm{Ric_{g_0}}}{\int_{M_0}c_1(L_0,h_0)}=\frac{\int_{M_0}c_1(M_0)}{\int_{M_0}c_1(L_0)}=\frac{2}{k},k\geq 1,k\in\mathbb{N},
\end{equation*}
where $c_1(L_0)$ and $c_1(M_0)$ denote Chern classes of $L_0$ and $M_0$, respectively. So $a_1^{(g_0)}=1$ and $A=-1$.

For $d>1$, it follows from Theorem \ref{Th:1.2} that
\begin{equation*}
  \sum_{j=0}^{+\infty}\left.a_j^{(g_0)}\right|_{\Omega}\;m^{d-j}=\prod_{j=1}^d(m+j),m\in\mathbb{N}.
\end{equation*}
Thanks to $\Omega$ is a dense subset of $M_0$, then
\begin{equation*}
  \sum_{j=0}^{+\infty}a_j^{(g_0)}m^{d-j}=\prod_{j=1}^d(m+j),m\in\mathbb{N}.
\end{equation*}

Now recall the Hirzebruch-Riemann-Roch formula. Let $V$ be a holomorphic vector bundle on a compact complex manifold $X$.  Let $H^j(X,V)$ denote the j-th cohomology of  $X$ with coefficients in the sheaf $\mathcal{O}(V)$ of germs of holomorphic sections of $V$. Let
$$\chi(X,V):=\sum_{j=0}^{+\infty}(-1)^j\dim H^j(X,V)$$
be the Euler-Poincar\'{e} characteristic of a holomorphic vector bundle $V$ over a compact complex manifold $X$. The Hirzebruch-Riemann-Roch formula is given by
\begin{equation*}
  \chi(X,V)=\int_X \textrm{Td}(X) \textrm{ch}(V),
\end{equation*}
where  the total Todd class $\textrm{td}(X)$ is defined by
\begin{equation*}
  \textrm{Td}(X):=\prod_j\frac{\xi_j}{1-e^{-\xi_j}},\;\text{if}\;\sum_jc_j(X)x^j=\prod_j(1+\xi_jx),
\end{equation*}
and the total Chern character $\textrm{ch}(E)$ is given by
\begin{equation*}
  \textrm{ch}(V)=\sum_je^{\zeta_j}, \;\text{if}\;\sum_jc_j(V)x^j=\prod_j(1+\zeta_jx).
\end{equation*}
The above $c_j(V)$ are Chern classes of $V$, and  $c_j(X)$ are Chern classes of the holomorphic tangent bundle of $X$.

For $X=M_0$ and $V=L_0^m$, from the Hirzebruch-Riemann-Roch formula, it follows that $\chi(M_0,L_0^m)$ is a polynomial in $m$. Since $L_0$ is a positive holomorphic line bundle
 over the Fano manifold $M_0$, it follows that
\begin{equation*}
  c_1(K_{M_0}\otimes L_0^{-m})=c_1(K_{M_0})+c_1(L_0^{-m})=-c_1(M_0)-mc_1(L_0)<0
\end{equation*}
for $m\geq 0$, where $K_{M_0}$ is the canonical line bundle of $M_0$. By the Kodaira vanishing theorem (e.g., page 68 of  \cite{Kobayashi-1987}), we get
\begin{equation*}
\dim H^j(M_0,L_0^m)=\dim H^{d-j}(M_0,K_{M_0}\otimes L_0^{-m})=0
\end{equation*}
for $1\leq j\leq d$ and $m\geq 0$.  So $\dim H^0(M_0,L_0^m)$ is a polynomial of degree $d$  in $m$ for $m\geq 0$.

From
\begin{equation*}
  \epsilon_{mg_0}\sim\sum_{j=0}^{+\infty} a_j^{(g_0)}m^{d-j}=\prod_{j=1}^d(m+j),\;\text{as}\;m\rightarrow +\infty,
\end{equation*}
we have
\begin{equation*}
 \dim H^0(M_0,L_0^m)=\int_{M_0} \epsilon_{mg_0}\frac{\omega_0^d}{d!}\sim \sum_{j=0}^{\infty} a_j^{(g_0)}m^{d-j}\int_{M_0} \frac{\omega_0^d}{d!}\;\text{as}\;m\rightarrow +\infty.
\end{equation*}
Hence
\begin{equation*}
 \dim H^0(M_0,L_0^m)=\prod_{j=1}^d(m+j)\int_{M_0}\frac{\omega_0^d}{d!}
\end{equation*}
for $m\geq 0$.

By $\dim H^0(M_0,L_0^m)=1$ for $m=0$, it follows that
\begin{equation*}
 \int_{M_0}\omega_0^d=\int_{M_0}(c_1(L_0))^d=1\;\text{and}\;\dim H^0(M_0,L_0^m)=\frac{1}{d!}\prod_{j=1}^d(m+j).
\end{equation*}
According to Theorem 1.1 of \cite{Kobayashi-Ochiai}, $M_0$ is biholomorphic to the complex projective space $\mathbb{CP}^d$.

Using Corollary 4.2 in \cite{Arezzo-Loi},  there is a positive integer $j$ and an automorphism $\Upsilon\in \textrm{Aut}(M_0)=\textrm{PGL}(d+1,\mathbb{C})$ such that $\omega_0=j\Upsilon^{*}\omega_{FS}$. As $a_1^{(g_0)}=\frac{1}{2}d(d+1)$, consequently $j=1$ and $\mathrm{Ric}_{g_0}=(d+1)\omega_0$.

Let $\phi$ be the K\"{a}hler potential of the K\"{a}hler form $\omega_0$ on the domain $\Omega\subset M_0$ such that the Hermitian metric $h_0$ on $E|_{\Omega}$ can be written as
\begin{equation*}
  h_0(u,u)=e^{-\phi(z)}\|w\|^2, \;u=w\tau(z)\in E,\;z\in \Omega.
\end{equation*}
Thus
\begin{equation*}
  \omega_0(z)=\frac{\sqrt{-1}}{2\pi}\partial\overline{\partial}\phi(z),\;z\in \Omega
\end{equation*}
and
\begin{equation*}
  \omega(u)=\frac{\sqrt{-1}}{2\pi}\partial\overline{\partial}\left(\phi(z)+\log(1+ce^{-\phi(z)}\|w\|^2)\right),\;u=w\tau(z)\in E,\;z\in \Omega.
\end{equation*}
So
\begin{eqnarray*}
 \mathrm{Ric}_{g}(u)   &=& -\frac{\sqrt{-1}}{2\pi}\partial\overline{\partial}\log\left(c^{r}e^{-r\phi}\det(\partial\overline{\partial}\phi)(1+ce^{-\phi(z)}\|w\|^2)^{-d-r-1}\right) \\
   &=&\frac{\sqrt{-1}}{2\pi}\partial\overline{\partial}(r\phi-\log\det(\partial\overline{\partial}\phi)+(n+1)\log(1+ce^{-\phi(z)}\|w\|^2))
\end{eqnarray*}
for $u=w\tau(z)\in E$ and $z\in \Omega$.

By $\mathrm{Ric}_{g_0}|_{\Omega}=-\frac{\sqrt{-1}}{2\pi}\partial\overline{\partial}\log\det(\partial\overline{\partial}\phi)$ and $\mathrm{Ric}_{g_0}=(d+1)\omega_0$,
 we get $\mathrm{Ric}_{g}=(n+1)\omega$ on $E$. Using $\omega$ can be extended across $M-E$, then $\mathrm{Ric}_{g}=(n+1)\omega$ on $M$. It follows from \cite{Kobayashi-Ochiai}
that $M$ is biholomorphically isomorphism to the complex projective space $\mathbb{CP}^n$.

\end{proof}

\section{Proof of Corollary \ref{Cor:1.4}  }

\begin{proof}[\textbf{Proof of Corollary} \ref{Cor:1.4}]

It is well-known that $\mathbb{CP}^1$ is defined by
\begin{equation*}
  \mathbb{CP}^1:=(\mathbb{C}^2-\{0\})/\mathbb{C}^{*}=\left\{[z_0,z_1]:|z_0|+|z_1|\neq 0\right\},
\end{equation*}
where $\mathbb{C}^{*}$ acts by multiplication on $\mathbb{C}^2$.

The tautological line bundle $\mathcal{O}_{\mathbb{CP}^1}(-1)$ over $\mathbb{CP}^1$ is defined by
\begin{equation*}
 \mathcal{O}_{\mathbb{CP}^1}(-1):=\{([z_0,z_1],w)\in \mathbb{CP}^1\times\mathbb{C}^2:w=\lambda(z_0,z_1),\lambda\in\mathbb{C}\},
\end{equation*}
its dual line bundle, denoted by $\mathcal{O}_{\mathbb{CP}^1}(1)$. Set
\begin{equation*}
  \mathcal{O}_{\mathbb{CP}^1}(-k):=\mathcal{O}_{\mathbb{CP}^1}(-1)^{\otimes k},\;\mathcal{O}_{\mathbb{CP}^1}(k):=\mathcal{O}_{\mathbb{CP}^1}(1)^{\otimes k}
\end{equation*}
for a positive integer $k$.

Let $\mathbb{CP}^1 =U_0 \cup U_1$ be the standard open covering, where
\begin{equation*}
  U_0:=\left\{[z_0,z_1]\in  \mathbb{CP}^1: z_0\neq 0\right\}=\left\{[1,w_0]\in  \mathbb{CP}^1: z\in\mathbb{C}\right\}
\end{equation*}
and
\begin{equation*}
  U_1:=\left\{[z_0,z_1]\in  \mathbb{CP}^1: z_1\neq 0\right\}=\left\{[w_1,1]\in  \mathbb{CP}^1: w\in\mathbb{C}\right\},
\end{equation*}
then $U_i$ biholomorphically equivalent to the domain $\mathbb{C}$.

For the holomorphic line bundle $\mathcal{O}_{\mathbb{CP}^1}(k)$ over $\mathbb{CP}^1$, there exists a trivialization

\begin{equation*}
 \theta_{\alpha}:\left\{\begin{array}{rcc}
                   \mathcal{O}_{\mathbb{CP}^1}(k)\supset \pi_0^{-1}(U_{\alpha}) & \longrightarrow & U_{\alpha}\times \mathbb{C}, \\\\
                   \zeta_{\alpha}\tau_{\alpha}(w_{\alpha}) & \longmapsto & (w_{\alpha},\zeta_{\alpha})
                 \end{array}\right.
\end{equation*}
for $\alpha=0\;\text{or}\; 1$, where
\begin{equation*}
\tau_{\alpha}(w_{\alpha})=\theta^{-1}_{\alpha}(w_{\alpha},1),\;w_0=\frac{1}{w_1},\;\zeta_0=\frac{\zeta_1}{w_1^k}.
\end{equation*}

A smooth Hermitian metric $h_{FS}^k$ of $\mathcal{O}_{\mathbb{CP}^1}(k)$ is determined by
\begin{equation*}
  h_{FS}^k(\mathbf{v},\mathbf{v}):=\frac{|\zeta_{\alpha}|^2}{(1+|w_{\alpha}|^2)^k},\;\mathbf{v}=\zeta_{\alpha}\tau_{\alpha}(w_{\alpha})\in \pi^{-1}(U_{\alpha})\subset\mathcal{O}_{\mathbb{CP}^1}(k),\alpha=0,1,
\end{equation*}
its the Chern curvature determined by
\begin{equation*}
  \omega:=-\frac{\sqrt{-1}}{2\pi}\partial\overline{\partial}\log\frac{1}{(1+|w_{\alpha}|^2)^k}=\frac{k\sqrt{-1}}{2\pi}\partial\overline{\partial}\log(1+|w_{\alpha}|^2),\;\alpha=0,1.
\end{equation*}

Let $M_0:=\mathbb{CP}^1$, $L_0:=\mathcal{O}_{\mathbb{CP}^1}(k)$,  $h_0:=h_{FS}^k$ and $\omega_0:=\omega$. For $m\geq 1$,  by $L_0^m=\mathcal{O}_{\mathbb{CP}^1}(mk)$ and the spaces $H^{0}(M_0,L_0^m)$ of holomorphic sections of $L_0^m$ are equal to the spaces of homogeneous polynomials of degrees $mk$ on $\mathbb{C}^2$, we have that there exist  isometric isomorphisms between
\begin{equation*}
  \mathcal{H}_m^2(M_0)=\left\{s\in H^{0}(M_0,L_0^m):\|s\|^2:=\int_{M_0}h^m_0(s,s)\omega_{0}<+\infty\right\}
\end{equation*}
and
\begin{equation*}
  \mathcal{H}_{m}^2(\mathbb{C}):=\left\{ f\in \mbox{\rm Hol}\big({\mathbb{C}}\big): \|f\|^2:=\frac{k\sqrt{-1}}{2\pi} \int_{\mathbb{C}}|f(z)|^2
\frac{1}{(1+|z|^2)^{mk+2}}dz\wedge d\bar{z}<+\infty\right\}.
\end{equation*}

Now that the reproduce kernels of $ \mathcal{H}_{m}^2(\mathbb{C})$ are
\begin{equation*}
  K_m(z,\bar{z})=\left(m+\frac{1}{k}\right)(1+|z|^2)^{mk},
\end{equation*}
it follows that the Bergman functions for $(L_0^m,M_0,h_0^m)$ are $\epsilon_{m g_0}=m+\frac{1}{k}$ on $U_0$ or $U_1$, so $\epsilon_{m g_0}=m+\frac{1}{k}$ on $M_0$.

 The above shows that $(L_0,M_0,h_0)=(\mathcal{O}_{\mathbb{CP}^1}(k),\mathbb{CP}^1,h_{FS}^k)$ satisfies the conditions of Theorem \ref{Th:1.3}, according to Theorem \ref{Th:1.3}, Corollary \ref{Cor:1.4} holds.

\end{proof}

\vskip 20pt

\noindent\textbf{Acknowledgments}\quad The author would like to thank the referee for many helpful suggestions. The author was supported by the Scientific
Research Fund of Leshan Normal University (No. ZZ201818).

\addcontentsline{toc}{section}{References}
\phantomsection
\renewcommand\refname{References}
\small{
}

\clearpage
\end{document}